\newtheorem{theorem}{Theorem}[section]
\newtheorem{lemma}[theorem]{Lemma}
\newtheorem{proposition}[theorem]{Proposition}
\theoremstyle{remark}
\title{The cap set problem: Up to dimension 7}
\author{Henry (Maya) Robert Thackeray}
\address{Department of Mathematics and Applied Mathematics, University of Pretoria, Pretoria, 0002 South Africa, maya.thackeray@up.ac.za, mayart314@outlook.com}
\begin{document}
\begin{abstract}
An \emph{$s$-cap $n$-flat} is given by a set of $s$ points, no three of which are on a common line, in an $n$-dimensional affine space over the field of three elements. The cap set problem in dimension $n$ is: what is the maximum $s$ such that there is an $s$-cap $n$-flat?

The first two papers in this series of articles considered the cap set problem in dimensions up to and including $5$. In this paper, which is the third in the series, we consider dimensions $6$ and $7$: we prove that every $110$-cap $6$-flat is a $112$-cap $6$-flat minus two cap points, and that there are no $289$-cap $7$-flats.
\end{abstract}
\maketitle

Keywords: cap, cap set, combinatorics, affine space, projective space

MSC2020: 51E20, 05B40, 05D99, 05B25, 51E15

\section{Introduction}

This paper is the third in a series that investigates the cap set problem. The cap set problem in dimension $n$ is: what is the maximum $s$ such that there is an \emph{$s$-cap $n$-flat} (also called an \emph{$n$-dimensional cap} of \emph{size} $s$), that is, a pair $(S,F)$ such that $F$ is an $n$-dimensional affine space over $\mathbb{Z}/3\mathbb{Z}$ and the size-$s$ subset $S$ of $F$ contains no three points on a common line? For dimensions up to and including $6$, the cap set problem is solved; see Davis and Maclagan \cite{DM03}, Edel et al.\@ \cite{EFLS02}, Ellenberg and Gijswijt \cite{EG17}, and Potechin \cite{P08} for background reading and previously known results for the problem.

The first paper of this series (Thackeray \cite{T21}) classified certain caps in dimensions up to and including $4$, and the second paper (Thackeray \cite{T22}) classified caps of size at least $41$ in dimension $5$; we take those two papers as known. The current paper investigates dimensions $6$ and $7$: we prove that every $110$-cap $6$-flat is a $112$-cap $6$-flat minus two cap points, and that there are no $289$-cap $7$-flats.

\section{Lemmas}

We start by recalling known results and proving useful lemmas.

\begin{theorem}[Large 5-dimensional caps]\label{n5s42to45}
Let $C$ be a $5$-dimensional cap. If we have $|C| \geq 43$, then $C$ is a $45$-cap $5$-flat minus at most two cap points; if $|C| = 42$, then $C$ is a $\triangle{}686$ or $C$ is a $45$-cap $5$-flat minus three cap points. In a $\triangle{}686$, the hyperplane point counts are as in Table \ref{tbl686dirs}.
\end{theorem}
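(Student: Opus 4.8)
The plan is to recast the statement as a classification of the \emph{complete} (non-extendable) caps of each size from $42$ to $45$, and then to bootstrap downward. If I can show that the only complete $5$-dimensional caps of size at least $42$ are the $45$-caps and the sporadic $\triangle686$ (of size $42$), the theorem follows formally: any cap $C$ with $|C| \ge 43$ that is not already complete can be enlarged one point at a time, and since no complete cap has size $43$ or $44$, this process terminates only at size $45$; hence $C$ is a $45$-cap minus $45 - |C| \le 2$ points. The size-$42$ case then splits into the complete caps (which must be $\triangle686$) and the extendable ones (which likewise grow into a $45$-cap, so are $45$-caps minus three points). Thus the whole theorem reduces to a completeness analysis for sizes $42$--$45$.

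The engine for that analysis is hyperplane slicing together with the classification of $4$-dimensional caps from \cite{T21}. Each of the $121$ hyperplane directions splits $C$ into three parallel $4$-caps of sizes $(a_d,b_d,c_d)$ summing to $|C|$, each at most $20$. Counting secant pairs against the $81$ directions that separate any fixed pair gives the second-moment identity $\sum_d (a_d^2 + b_d^2 + c_d^2) = |C|\,(40|C| + 81)$. First I would apply this at $|C| = 45$: since a direction all of whose slices have size at most $17$ contributes at most $699$ to $a_d^2+b_d^2+c_d^2$, whereas the identity forces the average of that quantity above $699$, some direction must have a slice of size at least $18$. By \cite{T21} such a slice is a $20$-cap minus at most two points, so its local structure is rigid; reconstructing $C$ from two transverse large slices pins the $45$-caps down to a single affine-equivalence class and establishes their structure.

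For sizes $42$, $43$, and $44$ the moment identity no longer hands me a large slice for free, so I would instead work directly toward an extension point. Slicing in a well-chosen direction, I read off the possible $4$-dimensional types of the slices via \cite{T21}, and use the transverse cap conditions---a point in one slice together with a point in another forbids exactly one point of the third parallel hyperplane---to locate a hole that completes to an admissible new cap point. Producing such a point proves the cap is not complete and hence embeds in a $45$-cap; failing to produce one should force the rigid local configuration of $\triangle686$.

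The hard part will be the size-$42$ boundary. Here the smallest slices can drop into the low teens, where the $4$-dimensional classification admits many types, so the case split is genuinely branching and I must certify that it is exhaustive. Within it I have to isolate the one configuration whose direction-by-direction hyperplane counts match Table~\ref{tbl686dirs}, verify that this $\triangle686$ is both complete and unique up to affine equivalence, and confirm that every other size-$42$ cap yields an extension point. Controlling this finite but intricate interaction---showing that two independently reconstructed $20$-cap skeletons in different directions can be realized simultaneously only in the ways catalogued, with the single sporadic exception---is where essentially all the real work lies.
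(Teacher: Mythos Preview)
The paper does not prove this theorem here: its proof is the single line ``See Thackeray \cite{T22}; the last sentence was checked directly using a representative $\triangle{}686$.'' In other words, the classification of $5$-dimensional caps of size $\ge 42$ is imported wholesale from the previous paper in the series, and only the hyperplane-count table for $\triangle{}686$ is a fresh (computer) verification. Your proposal, by contrast, sketches an ab initio proof. That is a different undertaking altogether, and you should be aware that what you are outlining is the content of an entire separate paper, not a lemma-sized argument.

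As a sketch, your reduction to complete caps and the second-moment identity $\sum_d(a_d^2+b_d^2+c_d^2)=|C|(40|C|+81)$ are correct and are indeed standard tools in this area. However, there is a concrete error: your claim that an $18$-slice ``is a $20$-cap minus at most two points, so its local structure is rigid'' is false. In dimension $4$ there are several inequivalent $18$-caps --- the paper itself refers to types $882A_{1}$, $882A_{2}$, $963B$, $981C$ --- and most are \emph{not} subcaps of a $20$-cap. The actual classification (in \cite{T22}) hinges precisely on distinguishing these types and on the delicate compatibility conditions (e.g.\ Lemma~\ref{n5882855para} here) governing how two $882A_{2}$ slices can sit in a $45$-cap. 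So the step where you ``reconstruct $C$ from two transverse large slices'' is where the real branching occurs, not after it, and your plan as written would stall there. Your honest flag that the size-$42$ case is the hard part is right in spirit, but the difficulty already bites at $45$.
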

\begin{table}
\[\begin{tabular}{cccc}
\hline
\textrm{Point count} & \textrm{Hyperplane directions} & \textrm{Point count} & \textrm{Hyperplane directions}\\
\hline
\{20,16,6\} & 3 & \{16,15,11\} & 24\\
\{18,18,6\} & 4 & \{16,14,12\} & 36\\
\{18,17,7\} & 18 & \{15,15,12\} & 3\\
\{18,12,12\} & 6 & \{14,14,14\} & 27\\
\hline
\end{tabular}\]
\caption{Hyperplane point counts of a $\Delta{}686$.\label{tbl686dirs}}
\end{table}
\begin{proof}
See Thackeray \cite{T22}; the last sentence was checked directly using a representative $\triangle{}686$.
\end{proof}

\begin{lemma}[Hyperplane directions of 45-cap 5-flat]\label{n5s45hypclassify}
Let $C$ be a $45$-cap $5$-flat. Of all the $3$-flat directions $D$ of $C$, exactly $45$ are such that the nine $3$-dimensional caps in the $3$-flats of $D$ are eight square pyramids and one tetrahedron plus centre. There is a unique line direction $L$ of $C$ such that in each of those $45$ directions $D$, each of the nine $3$-flats is the union of some lines in $L$. The cap $C$ has
\begin{itemize}
\item[(i)] exactly $10$ $\{18,9,18\}$ hyperplane directions in which the $18$-cap $4$-flats are both $882A_{1}$ caps,
\item[(ii)] exactly $45$ $\{18,9,18\}$ hyperplane directions in which the $18$-cap $4$-flats are both $882A_{2}$ caps,
\item[(iii)] exactly $30$ $\{15,15,15\}$ hyperplane directions in which each $4$-flat is a union of lines in $L$, and
\item[(iv)] exactly $36$ $\{15,15,15\}$ hyperplane directions in which each $4$-flat is not a union of lines in $L$,
\end{itemize}
and for each two hyperplane directions in the same one of those four categories, some symmetry of $C$ sends one of the two hyperplane directions to the other.
\end{lemma}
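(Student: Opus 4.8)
The plan is to fix a single representative and reduce everything to a finite, justified computation. By \cite{T22} the $45$-cap $5$-flat is unique up to the affine group, so I fix one representative $C$ and let $G$ be its group of affine symmetries; since any two $45$-cap $5$-flats are affinely equivalent and all the assertions are invariants of the isomorphism type, it suffices to verify them for $C$ and to analyse the action of $G$. The arena is finite and small: there are exactly $121$ hyperplane (that is, $4$-flat) directions and $\frac{(3^5-1)(3^4-1)}{(3^2-1)(3-1)} = 1210$ $3$-flat directions, so each classification below is a bounded enumeration.

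First I would handle the preamble about $3$-flat directions and the distinguished line direction $L$. For each of the $1210$ $3$-flat directions $D$, the $45$ points split into nine $3$-dimensional caps, one in each parallel $3$-flat of $D$; I compute this nine-tuple of $3$-cap isomorphism types and count the directions whose nine caps are eight square pyramids and one tetrahedron plus centre, checking that exactly $45$ occur. For each such $D$ the nine $3$-flats are unions of lines from a single line direction, and I verify that this direction is the \emph{same} line direction $L$ for all $45$ of them and that no other line direction has the analogous property; this pins down $L$ uniquely. Uniqueness is the key structural payoff: because $L$ is canonically attached to $C$, every element of $G$ must fix $L$, which is what later lets the four itemised categories be $G$-invariant.

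Next I would classify the $121$ hyperplane directions. For each I compute the point-count multiset of the three parallel $4$-flats; the first task is to show that only $\{18,9,18\}$ and $\{15,15,15\}$ occur, ruling out the other sums into three cap sizes at most $20$ by direct inspection of $C$. In a $\{18,9,18\}$ direction the two outer $4$-flats carry $18$-caps, and by the dimension-$4$ classification of \cite{T21} each such cap is of type $882A_{1}$ or $882A_{2}$; I check that the two $18$-caps in a given direction always have the same type, which splits these directions into the $A_{1}$ family and the $A_{2}$ family, and I count $10$ and $45$ respectively. In a $\{15,15,15\}$ direction I test each of the three $15$-caps for being a union of lines in the direction $L$ found above; this sorts the directions into items (iii) and (iv), and I count $30$ and $36$. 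The bookkeeping check is that $10+45+30+36 = 121$ exhausts all hyperplane directions.

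Finally comes the transitivity claim, which I expect to be the main obstacle. Each of the four defining properties --- the two point-count types, the $882A_{1}$ versus $882A_{2}$ distinction, and being or not being a union of $L$-lines --- is preserved by affine symmetries of $C$ (here using that $G$ fixes $L$), so $G$ permutes the directions within each category and the four categories are unions of $G$-orbits. What must be shown is the stronger statement that each category is a \emph{single} $G$-orbit rather than a union of several. I would establish this by computing the $G$-orbits on the $121$ hyperplane directions explicitly from the known symmetry group of $C$ and matching the orbit sizes against $10$, $45$, $30$, and $36$; equivalently, for one representative direction in each category I verify that its $G$-orbit already attains the full predicted size. Guarding against an accidental coincidence of distinct orbits sharing a defining property is the delicate point, and it is exactly this orbit computation --- rather than any single conceptual step --- that carries the weight of the proof.
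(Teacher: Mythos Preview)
Your proposal is correct and takes essentially the same approach as the paper: the paper's proof is the single sentence ``This was checked directly using a representative $45$-cap $5$-flat,'' and your plan is an explicit, well-organized description of exactly that finite verification (with the added justification via uniqueness of the $45$-cap $5$-flat and the observation that $G$ fixes $L$, making the four categories $G$-stable). There is nothing to add; your write-up simply unpacks what the paper leaves implicit.
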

\begin{proof}
This was checked directly using a representative $45$-cap $5$-flat.
\end{proof}

\begin{lemma}[Replacing cap points in an 18-cap 4-flat]\label{n4s18882A2repl3pts}
Let $(S,F)$ be an $882A_{2}$. Let $n \in \{1,2,3\}$. If $A$ and $B$ are subsets of $S$ and $F - S$ respectively such that $|A| = |B| = n$ and $((S - A) \cup B,F)$ is a cap, then that cap contains a $9$-cap $3$-flat (and therefore is neither an $882A_{1}$ nor an $882A_{2}$).
\end{lemma}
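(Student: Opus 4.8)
The plan is to reduce the conclusion to a pure incidence statement about the fixed cap $(S,F)$. Since any nine points of the new cap $S' := (S-A)\cup B$ lying in a common $3$-flat automatically form a cap (a subset of a cap), and since a $3$-flat of $F\cong AG(4,3)$ holds at most nine cap points, it suffices to exhibit a single $3$-flat $H$ with $|H\cap S'|\ge 9$. Writing $|H\cap S'| = |H\cap S| - |H\cap A| + |H\cap B|$ (using $A\subseteq S$ and $B\subseteq F\setminus S$), and recalling that an $882A_2$ has no $9$-point $3$-flat so $|H\cap S|\le 8$, the cleanest sufficient condition is: there is an $8$-point $3$-flat $H$ of $S$ with $A\cap H=\emptyset$ and $B\cap H\ne\emptyset$. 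The whole proof is then the search for such an $H$, for an arbitrary admissible pair $(A,B)$.

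Next I would exploit the $882$ direction itself. Its two full hyperplanes $H_1,H_2$ (the ones meeting $S$ in $8$ points) are already $8$-point $3$-flats, so if some added point $b\in B$ lies in $H_1\cup H_2$ and the corresponding $H_i$ contains no point of $A$, we are done immediately. Since $|F\setminus S| = 63$ splits as $38$ points in $(H_1\cup H_2)\setminus S$ and $25$ points in $H_3\setminus S$ (where $H_3\cap S$ has only two points), this disposes of every configuration except the \emph{bad case} in which every added point lies in $H_3\setminus S$ while $A\subseteq H_1\cup H_2$: there the $882$ direction only raises $H_3$ from $2$ to at most $2+n\le 5$ and lowers $H_1,H_2$, so it produces no $9$-point $3$-flat and a different direction must be found.

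The decisive step is therefore the analysis of the sparse hyperplane. Using explicit coordinates for a representative $882A_2$ (the structure is pinned down by the dimension-$4$ classification of Thackeray \cite{T21}), I would list all $8$-point $3$-flats of $S$ and record, for each of the $25$ points $p\in H_3\setminus S$, the $8$-point $3$-flats through $p$ that lie in directions other than the $882$ direction. The claim to verify from this data is that each such $p$ lies in enough $8$-point $3$-flats, spread over enough directions, that no set $A$ of at most three points can meet all of them: since an $8$-flat $H$ containing both $p$ and a blocking point $a\in A$ must contain the line through $p$ and $a$, and at most a bounded number of $8$-flats pass through any fixed line, a short counting argument then yields an $8$-flat through a given added point $b\in B\cap H_3$ that avoids all of $A$, completing the sufficient condition. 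The main obstacle is exactly this bad case with $n=3$: one must control, simultaneously for the structurally most constrained points of $H_3\setminus S$, both the \emph{number} of other-direction $8$-flats through them and which three removed points could block those flats, and this is where the detailed incidence table of $882A_2$ is indispensable. Finally, the parenthetical remark is immediate, since by construction neither $882A_1$ nor $882A_2$ contains a $9$-point $3$-flat, so a cap that does contain one can be neither.
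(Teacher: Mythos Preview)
The paper's proof is a direct exhaustive computer search: for each $n\in\{1,2,3\}$ and each $n$-subset $S_{-}\subseteq S$, a program enumerated all $n$-subsets $S_{+}\subseteq F-(S-S_{-})$ for which $(S-S_{-})\cup S_{+}$ is a cap with no $9$-cap $3$-flat, and verified that the only solutions are the trivial ones with $S_{+}=S_{-}$. No structural reduction is attempted.

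Your proposal is a genuinely different, structural plan, but as written it has a concrete gap in the case split and never carries out the decisive step. First, the sentence ``this disposes of every configuration except the bad case in which every added point lies in $H_{3}\setminus S$ while $A\subseteq H_{1}\cup H_{2}$'' is not correct. Your opening observation only succeeds when some $b\in B$ lies in an $H_{i}$ ($i\in\{1,2\}$) that is \emph{disjoint} from $A$; it fails equally well when, say, $b\in H_{1}$ and $A$ also meets $H_{1}$. For $n=2$ one can have $b_{1}\in H_{1}$, $b_{2}\in H_{2}$, $a_{1}\in H_{1}$, $a_{2}\in H_{2}$, and for $n=3$ there are further such mixed configurations; none of these is your stated ``bad case'', yet neither $H_{1}$ nor $H_{2}$ yields a $9$-point $3$-flat. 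The extra clause $A\subseteq H_{1}\cup H_{2}$ is also spurious: points of $A$ in $H_{3}\cap S$ (there are two such points of $S$) are not ruled out and do not help your argument.

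Second, even in the residual cases you do identify, the proof is only a promise. You say you would list the $8$-point $3$-flats through each $p\in H_{3}\setminus S$ and then run ``a short counting argument'' showing no $3$-element $A$ can block them all, but you neither produce that incidence table nor give the count. Since the lemma is sharp (the paper notes that extra solutions appear at $n=4$), any such counting must use $|A|\le 3$ in an essential way, and without the actual numbers there is no way to see that the blocking argument goes through; indeed your sufficient condition (an $8$-point $3$-flat avoiding $A$ and meeting $B$) is strictly stronger than what is needed, so it could in principle fail even when the lemma holds. As it stands, the proposal is a reasonable outline for a hand proof, but the missing incidence data and the incomplete case analysis mean it does not yet establish the result.
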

\begin{proof}
Without loss of generality, let $(S,F)$ be the $882A_{2}$ in Thackeray \cite[Figure 42, ``Another $882A_{2}$'']{T21}. For each $n \in \{1,2,3\}$, and for each subset $S_{-}$ of $S$ such that $|S_{-}| = n$, a computer search found each possible subset $S_{+}$ of $F - (S - S_{-})$ such that $|S_{+}| = n$ and $((S - S_{-}) \cup S_{+},F)$ is a cap with no $9$-cap $3$-flat (which corresponds to $A = S_{-} - S_{+}$ and $B = S_{+} - S_{-}$). In the cases $n = 1$, $n = 2$, and $n = 3$, there are respectively $\binom{18}{1}$, $\binom{18}{2}$, and $\binom{18}{3}$ solutions such that $S_{+} = S_{-}$ (so $A = B = \emptyset$); the computer search confirmed that there are no more solutions for each of those values of $n$. (A further computer search confirmed that for $n = 4$, additional solutions do exist.)
\end{proof}

Each $882A_{2}$ has the following properties.
\begin{itemize}
\item[(a)] The $882A_{2}$ has exactly one $2$-flat direction in which each $2$-flat has exactly two cap points. This is the \emph{nine-$2$s} $2$-flat direction.
\item[(b)] The $882A_{2}$ has exactly one $\{8,5,5\}$ hyperplane direction in which the $8$-cap $3$-flat is a cube. This direction is the \emph{$855$ cube direction}.
\item[(c)] The $882A_{2}$ has exactly one $\{8,8,2\}$ hyperplane direction in which both $8$-cap $3$-flats are cubes. This direction is the \emph{$882$ cube direction}.
\item[(d)] There is some independent pair $(x_{1},x_{2})$ of co-ordinates (determined up to negating $x_{1}$ and/or $x_{2}$) such that the $x_{1}$-hyperplane (respectively, $x_{2}$-hyperplane) direction is the $855$ cube direction (respectively, the $882$ cube direction) and the point count of the $882A_{2}$ for $(x_{1},x_{2})$ is
\[\left(\begin{array}{ccc}
2 & 4 & 2\\
1 & 0 & 1\\
2 & 4 & 2
\end{array}\right).\]
For such a pair $(x_{1},x_{2})$, the $4$-cap $2$-flats $(x_{1},x_{2}) = (0,\pm 1)$ are translations of each other; they are the \emph{standard squares} of the $882A_{2}$. The midpoints of the cap line segments in the four $2$-flats $(x_{1},x_{2}) = (\pm 1,\pm 1)$ form a square -- which is the \emph{square of midpoints} of the $882A_{2}$ -- and the $2$-flat containing that square also contains the centres of the standard squares and the cap points in the $2$-flats $(x_{1},x_{2}) = (\pm 1,0)$.
\end{itemize}

\begin{lemma}[Hyperplanes that are 18-cap 4-flats]\label{n5882855para}
In each $45$-cap $5$-flat, for each $\{18,18,9\}$ hyperplane direction $D$ in which the $18$-cap $4$-flats are $882A_{2}$ caps, we have the following.
\begin{itemize}
\item[(a)] The nine-$2$s $2$-flat directions of the $18$-cap $4$-flats in $D$ are parallel.
\item[(b)] The $882$ cube direction of each $18$-cap $4$-flat in $D$ is parallel to the $855$ cube direction of the other $18$-cap $4$-flat in $D$.
\item[(c)] The side directions of each standard square of each $18$-cap $4$-flat in $D$ are parallel to the diagonal directions of each standard square of the other $18$-cap $4$-flat in $D$.
\end{itemize}
\end{lemma}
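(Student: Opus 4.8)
The plan is to reduce the statement to a single direct computation by exploiting the transitivity built into Lemma \ref{n5s45hypclassify}. The hypothesis here singles out exactly the $\{18,9,18\}$ hyperplane directions whose two $18$-cap $4$-flats are $882A_{2}$ caps, and these are precisely the $45$ directions of category (ii) in Lemma \ref{n5s45hypclassify}. The last sentence of that lemma asserts that the symmetry group of $C$ acts transitively on these $45$ directions. I would therefore aim to show that each of (a), (b), and (c) is preserved by the symmetries of $C$, so that each holds for all $45$ directions as soon as it holds for one.

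First I would record that an affine symmetry $g$ of $C$ permutes the direction classes of $F$ (line directions to line directions, $2$-flat directions to $2$-flat directions, hyperplane directions to hyperplane directions), preserves collinearity and midpoints, and hence carries the $882A_{2}$ structure of a hyperplane to that of its image hyperplane. Consequently $g$ sends the nine-$2$s $2$-flat direction, the $855$ cube direction, the $882$ cube direction, and the side and diagonal directions of the standard squares of an $882A_{2}$ to the corresponding features of the image $882A_{2}$, since each feature is pinned down by an affine-invariant incidence property (using the uniqueness clauses in properties (a)--(d) of the $882A_{2}$). Because parallelism of direction classes is itself affine-invariant, and because each of (a), (b), (c) is phrased symmetrically in the two $18$-cap $4$-flats of $D$ (so the claim is insensitive to whether $g$ swaps the two hyperplanes), applying $g$ transports each of (a), (b), (c) from a direction $D$ to the direction $gD$. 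Transitivity then yields all three properties for every category-(ii) direction once they are known for a single $D_{0}$.

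The remaining task is to verify (a), (b), (c) for one representative direction $D_{0}$ in a fixed representative $45$-cap $5$-flat; by the uniqueness of the $45$-cap $5$-flat up to affine equivalence (as already used in Lemma \ref{n5s45hypclassify}) this suffices. Concretely I would fix coordinates, choose one category-(ii) direction, realise each of its two $18$-cap $4$-flats as an $882A_{2}$ in the normal form of property (d), read off the nine-$2$s direction, the two cube directions, and the side and diagonal directions of each standard square in each $4$-flat, and then check the three parallelism assertions directly.

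I expect the main obstacle to be bookkeeping rather than conceptual. One must be careful that the affine-invariant characterisations of the nine-$2$s direction and of the two cube directions really do transfer correctly when the two $4$-flats are interchanged by a symmetry, and that the ``side versus diagonal'' exchange in (c) is handled consistently across the two $4$-flats; this is where a sign or labelling error would most easily creep in. Once the reduction to a single $D_{0}$ is justified, the verification itself is a bounded finite computation on one explicit example and poses no further difficulty.
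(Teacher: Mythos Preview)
Your proposal is correct and lands on the same endgame as the paper---reduce to a single explicit direction in a single representative $45$-cap $5$-flat and verify (a), (b), (c) there---but you reach that reduction by a different route. The paper invokes \cite[Lemma 3.1]{T22}, which asserts that the pair $(C,D)$ consisting of a $45$-cap $5$-flat together with a chosen $\{18,18,9\}$ hyperplane direction whose $18$-caps are $882A_{2}$'s is unique up to isomorphisms preserving $D$; this immediately collapses everything to one instance. You instead stay inside the present paper, using the transitivity clause of Lemma~\ref{n5s45hypclassify} on the $45$ category~(ii) directions, together with the (correct) observation that each of (a)--(c) is an affine-invariant statement symmetric in the two $18$-cap $4$-flats. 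Your route is more self-contained but costs the extra invariance bookkeeping you flag; the paper's route is a one-line citation but leans on an external lemma. Either way the residual work is the same finite check on a single $D_{0}$.
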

\begin{proof}
This follows from Thackeray \cite[Lemma 3.1]{T22}, which indicates that a $45$-cap $5$-flat with a chosen $\{18,18,9\}$ hyperplane direction $D$ in which the $18$-cap $4$-flats are $882A_{2}$ caps is unique up to isomorphisms under which the image of each hyperplane in $D$ is a hyperplane in $D$.
\end{proof}

\begin{lemma}[Two 18-cap 4-flats minus cap points]\label{n5Double882A2Min3}
Consider a $5$-dimensional cap $C$ with co-ordinate $x_{1}$. Let the cap $\widetilde{C}$, with the same underlying $5$-flat as $C$, be the union of two $882A_{2}$ caps in the $4$-flats $x_{1} = \pm 1$ respectively. Suppose that the union of the two $4$-dimensional subcaps $x_{1} = \pm 1$ of $C$ is obtained from that of $\widetilde{C}$ by removing $n$ cap points, for some nonnegative integer $n$.
\begin{itemize}
\item[(a)] Suppose that the $882A_{2}$ caps $x_{1} = \pm 1$ in $\widetilde{C}$ are translations of each other. If $n$ is $0$, $1$, $2$, or $3$, then the number of cap points in the $4$-flat $x_{1} = 0$ of $C$ is $0$, at most $1$, at most $2$, or at most $3$ respectively.
\item[(b)] Suppose that the $882A_{2}$ caps $x_{1} = \pm 1$ in $\widetilde{C}$ are point reflections of each other. If $n$ is $0$, $1$, $2$, or $3$, then the number of cap points in the $4$-flat $x_{1} = 0$ of $C$ is at most $4$, at most $4$, at most $5$, or at most $5$ respectively.
\end{itemize}
Each of those upper bounds cannot be lowered.
\end{lemma}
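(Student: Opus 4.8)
The plan is to reduce each of the two cases to one explicit representative pair of $882A_2$ caps, to translate the cap condition at $x_1 = 0$ into a statement about a sumset or a difference set in the common $4$-flat, and then to track how the removal of $n$ points enlarges the set of admissible positions.

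First I would record the elementary collinearity bookkeeping. Working over the field of three elements, a line meets the three hyperplanes $x_1 = 1, 0, -1$ either in one point of each or in three points of a single hyperplane, so the only triples that can constrain a would-be cap point at $x_1 = 0$ are those using one cap point from $x_1 = 1$ and one from $x_1 = -1$; since three distinct collinear points sum to zero, a point projecting to $w$ in the common $4$-flat $W$ is admissible at $x_1 = 0$ precisely when $w \notin \{-(q+r)\}$ as $q$ and $r$ range over the present cap points of the flats $x_1 = 1$ and $x_1 = -1$. In particular two admissible positions never obstruct each other, so the quantity sought is the size of a largest cap contained in this admissible set.

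Next I would fix canonical coordinates. Using the shear $(x_1, y) \mapsto (x_1, y + x_1 a)$, which fixes the flat $x_1 = 0$ and preserves every line, together with a translation in $W$, and using that an $882A_2$ cap is unique up to affine equivalence (Thackeray \cite{T21}), I can assume that the two defining $882A_2$ caps coincide with a single fixed cap $T$ in the translation case, and are $T$ and $-T$ in the point-reflection case; Lemma \ref{n5882855para} supplies the parallelism that makes this simultaneous normalisation available. The blocking set of $\widetilde{C}$ then becomes the sumset $T + T$ (translation) or the difference set $T - T$ (reflection). The two base cases $n = 0$ thus reduce to two concrete facts about the representative, to be checked directly: that $T + T$ is all of $W$ (so no position is admissible, giving $0$), and that the complement of $T - T$ consists of exactly four points, which moreover form a cap (so all four are simultaneously admissible, giving $4$). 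This already accounts for the asymmetry between the two starting bounds.

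For $n \geq 1$ I would analyse openings through the ordered representation function $\rho(w) = |\{(q,r) : q + r = -w\}|$ on the representative. Removing $n$ points, some from the copy at $x_1 = 1$ and some from the copy at $x_1 = -1$, makes a position $w$ admissible exactly when the removed points form a hitting set for the pairs counted by $\rho(w)$, so the admissible set is governed by which positions have small, suitably clustered representation counts. I would then verify on the representative the structural facts that force the tight bounds: in the translation case, each single removal opens at most one further position and the positions opened by any size-$n$ removal contain no cap of size exceeding $n$, yielding $0,1,2,3$; in the reflection case, the four initial positions persist, no single removal opens a fifth position that together with the four forms a cap, some pair of removals does open such a fifth, and no triple opens a cap-forming sixth, yielding $4,4,5,5$. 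Sharpness is witnessed in each case by exhibiting one explicit removal attaining the stated number. The main obstacle I anticipate is exactly the reflection case, whose bounds $4,4,5,5$ are not monotone in the removal budget: it is not enough to count how many positions become geometrically unblocked, because one must simultaneously check that the unblocked positions are realisable as a cap, and it is the interaction between the hitting-set condition (which controls unblocking) and the no-three-collinear condition (which controls realisability) that forces the value to stall at $4$ for $n = 1$ and at $5$ for $n = 3$. Establishing these equalities rigorously rests on the representation-count structure of the specific $882A_2$ cap, which is why the argument is best organised around a single explicit representative with a finite verification for each $n$.
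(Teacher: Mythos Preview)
Your proposal is essentially the paper's own approach: reduce to a single explicit representative pair of $882A_2$ caps, compute for each point $Q$ of the middle $4$-flat the number $n(Q)$ of blocking segments (your representation function $\rho$), record the structural facts about the small-$n(Q)$ strata, and read off the bounds for each $n\le 3$; the sumset/difference-set language is just a reformulation of the paper's midpoint count. Two small points: the appeal to Lemma~\ref{n5882855para} is misplaced, since that lemma concerns $45$-cap $5$-flats and the translate/reflection hypothesis is already given to you here (your shear-plus-translation normalisation needs nothing more), and your sketch for $n\ge 1$ should be made precise in exactly the way the paper does it---namely by recording not just the level sets of $n(Q)$ but also the collinearities among the newly opened positions (e.g.\ in the translation case the two $n(Q)=1$ points opened together with an $n(Q)=2$ point lie on a line through that point), since it is these collinearities, and in part (b) the intersection pattern of the sets $S_{\pm 1}(Q)$, that convert ``at most $k$ positions unblocked'' into ``at most $k$ cap points''.
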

\begin{proof}
Consider $\widetilde{C}$. For each point $Q$ in the $4$-flat $x_{1} = 0$, find the number $n(Q)$ of cap line segments $L$ with one cap point in each of the $4$-flats $x_{1} = \pm 1$ such that $Q$ is the midpoint of $L$, and for each $a \in \{\pm 1\}$, let $S_{a}(Q)$ be the set of cap points $P$ in the $882A_{2}$ cap $x_{1} = a$ such that the midpoint of $PQ$ is a cap point in the $882A_{2}$ cap $x_{1} = -a$.

For part (a), it was checked that if the $882A_{2}$ cap $x_{1} = 1$ is the image of the $882A_{2}$ cap $x_{1} = -1$ under the translation map $U$, then the following statements hold:
\begin{itemize}
\item[(i)] there are exactly $18$ points $Q$ with $n(Q) = 1$, namely, the images under $U$ of the cap points in $x_{1} = 1$; for these points $Q$, the cap line segment with midpoint $Q$ is $U(Q)U^{-1}(Q)$;
\item[(ii)] there are exactly four points $Q$ with $n(Q) = 2$, which form the image under $U$ of the square of midpoints of the $882A_{2}$ cap $x_{1} = 1$; these four points $Q$ are the midpoints of eight disjoint cap line segments $L$, and for each of these four points $Q$ and each $a \in \{\pm 1\}$, the cap line segment $S_{a}(Q)$ is not contained in any $2$-flat in the nine-$2$s $2$-flat direction of the $882A_{2}$ cap $x_{1} = a$, and $Q$ is the midpoint of the image of $S_{a}(Q)$ under $U^{a}$; and
\item[(iii)] all other points $Q$ satisfy $n(Q) \geq 4$.
\end{itemize}
The result in part (a) follows. (After at most three cap points are removed from $\widetilde{C}$ to obtain the $4$-flats $x_{1} = \pm 1$ of $C$, consider which points $Q$, and how many points $Q$, can be cap points in the $4$-flat $x_{1} = 0$ of $C$.)

For part (b), it was checked that if the square of midpoints of the $882A_{2}$ cap $x_{1} = 1$ is the image of the square of midpoints of the $882A_{2}$ cap $x_{1} = -1$ under the translation map $U$, then the following statements hold:
\begin{itemize}
\item[(i)] there are exactly four points $Q$ with $n(Q) = 0$, namely, the images under $U$ of the cap points in the square of midpoints of $x_{1} = 1$;
\item[(ii)] there are exactly four points $Q$ with $n(Q) = 2$, they are the midpoints of eight disjoint cap line segments $L$, those points $Q$ form a square $S$ that is a translation of each of the standard squares of the $882A_{2}$ caps $x_{1} = \pm 1$, and the centre of $S$ is the image under $U$ of the centre of the square of midpoints of $x_{1} = 1$;
\item[(iii)] there are exactly $24$ points $Q$ with $n(Q) = 3$; for each $Q_{2}$ and $Q_{3}$ with $n(Q_{2}) = 2$ and $n(Q_{3}) = 3$, the intersection of $S_{1}(Q_{2}) \cup S_{-1}(Q_{2})$ and $S_{1}(Q_{3}) \cup S_{-1}(Q_{3})$ has at most one point or is a cap line segment with midpoint $Q_{3}$; for each two different points $Q_{3}$ and $Q_{3}'$ with $n(Q_{3}) = n(Q_{3}') = 3$, if the intersection of $S_{1}(Q_{3}) \cup S_{-1}(Q_{3})$ and $S_{1}(Q_{3}') \cup S_{-1}(Q_{3}')$ has more than two points, then
\begin{itemize}
\item[(1)] that intersection has exactly three points,
\item[(2)] that intersection determines $\{Q_{3},Q_{3}'\}$, and
\item[(3)] the midpoint $Q_{0}$ of the line segment $Q_{3}Q_{3}'$ satisfies $n(Q_{0}) = 0$;
\end{itemize}
and
\item[(iv)] all other points $Q$ satisfy $n(Q) \geq 4$.
\end{itemize}
The result in part (b) follows as in part (a).
\end{proof}

\section{Dimension 6}

Throughout this section, $C$ is a $6$-dimensional cap with some co-ordinate $\widetilde{x}_{1}$.

Potechin \cite{P08} proved that there is a unique $112$-cap $6$-flat up to isomorphism. A representative $112$-cap $6$-flat is shown in Figure \ref{fign6s112rep}.

\begin{figure}
\includegraphics{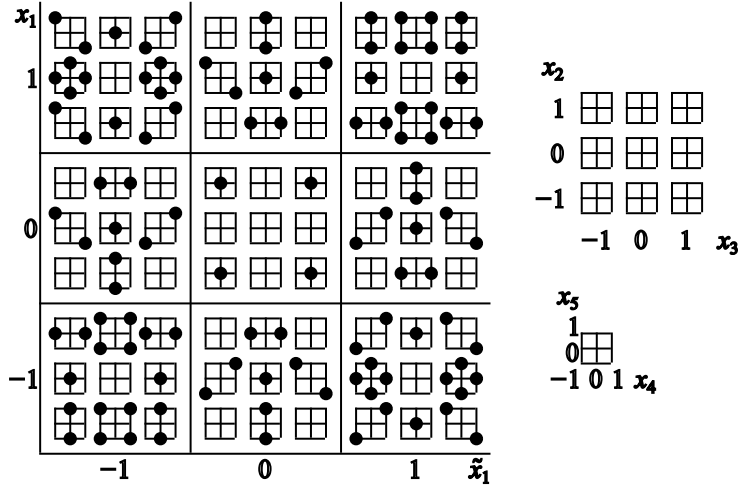}
\caption{A $112$-cap $6$-flat. Each of the four $18$-cap $4$-flats $(\widetilde{x}_{1},x_{1}) = (\pm 1,\pm 1)$ is an $882A_{2}$.\label{fign6s112rep}}
\end{figure}

\begin{lemma}[The 112-cap 6-flat]\label{n6s112str}
Suppose that $C$ is a $112$-cap $6$-flat.
\begin{itemize}
\item[(a)] The numbers of $\{45,45,22\}$ hyperplane directions and $\{40,36,36\}$ hyperplane directions of $C$ are $56$ and $308$ respectively.
\item[(b)] For every $\{40,36,36\}$ hyperplane direction $D$ of $C$, there are co-ordinates $y_{1}$ and $y_{2}$ of $C$ with $(y_{1},y_{2})$ independent such that each of the $y_{1}$- and $y_{2}$-hyperplane directions of $C$ has point count $\{45,45,22\}$ and the $(y_{1} - y_{2})$-hyperplane direction of $C$ is $D$.
\item[(c)] In each $4$-flat direction of $C$, there are (i) four $18$-cap $4$-flats, four $9$-cap $4$-flats, and one $4$-cap $4$-flat; (ii) six $15$-cap $4$-flats, one $10$-cap $4$-flat, and two $6$-cap $4$-flats; or (iii) one $16$-cap $4$-flat and eight $12$-cap $4$-flats. The numbers of $4$-flat directions in cases (i), (ii), and (iii) are respectively $\binom{56}{2} = \mbox{$1$ $540$}$, \mbox{$3$ $696$}, and \mbox{$5$ $775$}.
\item[(d)] In part (c), the $4$-flat directions in case (i) are precisely the $4$-flat directions $D$ that are obtained from the unordered pairs $\{D_{1},D_{2}\}$ of different $\{45,45,22\}$ hyperplane directions of $C$ by letting each $4$-flat in $D$ be the intersection of a hyperplane in $D_{1}$ with a hyperplane in $D_{2}$.
\end{itemize}
\end{lemma}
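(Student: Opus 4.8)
The plan is to build on Potechin's theorem that the $112$-cap $6$-flat is unique up to isomorphism, so that it suffices to verify (a)--(d) for the single representative $C$ of Figure \ref{fign6s112rep}, and to organize that verification through the direction geometry of $C$. There are $364$ hyperplane directions (the $5$-dimensional direction subspaces, equivalently the points of $PG(5,3)$) and $\binom{6}{2}_{3}=11011$ $4$-flat directions (the $4$-dimensional direction subspaces), and each $4$-dimensional subspace is contained in exactly four $5$-dimensional subspaces, so each $4$-flat direction is refined by exactly four hyperplane directions. Since each hyperplane of $C$ is a $5$-dimensional cap (so has at most $45$ points by Theorem \ref{n5s42to45}) and each $4$-flat is a $4$-dimensional cap (so has at most $20$ points), the point counts are tightly constrained. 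The one input I would read off $C$ at the outset is that the only hyperplane point-count types occurring are $A=\{45,45,22\}$ and $B=\{40,36,36\}$, and that the $4$-flat counts lie in the listed set (in particular are at most $18$); everything else follows by counting.

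For part (a), write $N_{A}$ and $N_{B}$ for the numbers of directions of each type. I would combine $N_{A}+N_{B}=364$ with the pair-count identity $\sum_{D}\sum_{i}\binom{a_{i}(D)}{2}=121\binom{112}{2}$, in which $a_{i}(D)$ are the three hyperplane counts of a direction $D$ and $121$ is the number of hyperplane directions through a fixed line direction. The per-type contributions are $2\binom{45}{2}+\binom{22}{2}=2211$ and $\binom{40}{2}+2\binom{36}{2}=2040$, and the resulting linear system has the unique solution $N_{A}=56$, $N_{B}=308$.

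The structural core is the claim that no $4$-flat direction is contained in three or more type-$A$ hyperplane directions. I would prove this from the $3\times 3$ matrix $M$ of the nine $4$-flat point counts of a given $4$-flat direction, whose rows, columns, and two diagonals are the four parallel classes of $AG(2,3)$; a class coming from a type-$A$ hyperplane direction has line sums $\{45,45,22\}$. If three classes were type $A$, a translation normalizes the light row and light column to position $3$, and the light diagonal is then either concurrent with them, which forces one of the entries $45-a-e$ and $22-a-e$ out of the range $[0,20]$, or forms a triangle, which forces $a+d+e=181/3>60$, impossible since $a,d,e\le 20$. Hence every profile is $AABB$, $ABBB$, or $BBBB$.

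I would then finish (d), (c), and (b) together. For (d), sending an $AABB$ direction to the unordered pair of its two type-$A$ hyperplane directions is well defined and injective by the structural claim, and each pair $\{U_{1},U_{2}\}$ of type-$A$ directions returns the $4$-flat direction $U_{1}\cap U_{2}$, which lies beneath at least, hence exactly, two type-$A$ directions; this gives the bijection with the $\binom{56}{2}=1540$ pairs, and reading the entries of $M$ as intersections of $45$- and $22$-caps identifies the $AABB$ pattern as case (i) with counts $\{18,18,9;18,18,9;9,9,4\}$. Part (c) follows by counting: $N_{\mathrm{AABB}}=1540$, the incidence identity $2N_{\mathrm{AABB}}+N_{\mathrm{ABBB}}=56\cdot 121$ gives $N_{\mathrm{ABBB}}=3696$, and the total $11011$ gives $N_{\mathrm{BBBB}}=5775$, with the row/column/diagonal analysis of $M$ confirming the $ABBB$ and $BBBB$ patterns as cases (ii) and (iii). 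For (b), over each $AABB$ direction the two type-$B$ directions are the diagonal classes $\ker(y_{1}\pm y_{2})$, where $\ker y_{1}$ and $\ker y_{2}$ are the two type-$A$ directions; the $2\cdot 1540=3080=10\cdot 308$ such incidences, spread uniformly by the symmetry of $C$, show that every type-$B$ direction arises as such a diagonal, and a sign change of $y_{2}$ arranges $D=\ker(y_{1}-y_{2})$. The main obstacle is exactly the pair of facts I would read off the representative rather than derive by counting, namely that the hyperplane counts are confined to types $A$ and $B$ and that the $4$-flat counts are confined to the listed values (at most $18$); the latter is what excludes spurious matrices such as $\{20,16,9;16,20,9;9,9,4\}$, which satisfy the same row, column, and bound constraints as case (i). These reflect the rigidity of the unique $112$-cap, do not follow from the linear and incidence identities alone, and -- together with the uniform spreading of incidences used in (b) -- are where direct appeal to the representative and its symmetry group seems unavoidable.
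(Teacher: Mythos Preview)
The paper's proof is far simpler than yours: it verifies (a), (b), and (c) by direct computer check on the representative of Figure~\ref{fign6s112rep} (legitimate by Potechin's uniqueness), and then deduces (d) in one line from (a) and (c) by reading the three $3\times 3$ point-count matrices and observing that only case~(i) sits below two $\{45,45,22\}$ hyperplane directions. Your counting approach is a genuinely different route, and the derivation of $N_A=56$, $N_B=308$ from the pair identity is clean; likewise the incidence counts $2\cdot 1540 + N_{ABBB} = 56\cdot 121$ and $1540+3696+5775=11011$ are correct and pleasant.

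However, several steps are not carried through. Your ``triangle'' case for the structural claim is garbled: the actual obstruction is integrality, since with light row~3, light column~3, and a light main diagonal not through $(3,3)$ one gets $3(b-d)=\pm 23$, not the ``$181/3$'' you wrote. In the concurrent case your entries $45-a-e$ and $22-a-e$ are correct, and the real contradiction is that the first forces $a+e\ge 25$ while the second forces $a+e\le 22$; this uses only the bound $20$, so your later remark that the bound $18$ is needed here is not accurate. More seriously, you never show that the profiles $ABBB$ and $BBBB$ force the specific matrices of cases~(ii) and~(iii): knowing all four line-sum triples does not by itself pin down the $3\times 3$ matrix, so your ``row/column/diagonal analysis of $M$'' is not an argument as it stands. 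Finally, your proof of (b) rests on the incidences being ``spread uniformly by the symmetry of $C$'', i.e.\ on transitivity of $\mathrm{Aut}(C)$ on the $308$ type-$B$ directions, which you have not established; the average of $10$ incidences per type-$B$ direction does not by itself show every such direction is hit. Since you already concede that the two hyperplane types and the restricted $4$-flat counts must be read off the representative, the paper's choice to read off (a)--(c) in their entirety is no greater an appeal to computation, and it closes all of these gaps at once.
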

\begin{proof}
Parts (a) to (c) were checked directly using a representative $112$-cap $6$-flat. From parts (a) and (c), we deduce that each $4$-flat direction $D$ has co-ordinates $y_{1}$ and $y_{2}$ such that the $4$-flat point count of $C$ for $(y_{1},y_{2})$ is
\[\left(\begin{array}{ccc}
18 & 9 & 18\\
9 & 4 & 9\\
18 & 9 & 18
\end{array}\right), \left(\begin{array}{ccc}
15 & 6 & 15\\
15 & 10 & 15\\
15 & 6 & 15
\end{array}\right), \textrm{ or } \left(\begin{array}{ccc}
12 & 12 & 12\\
12 & 16 & 12\\
12 & 12 & 12
\end{array}\right).\]
Part (d) follows.
\end{proof}

\begin{lemma}[Refining \{45,*,45\}]\label{n6180918882A2}
Suppose that the two $5$-flats $\widetilde{x}_{1} = \pm 1$ have exactly $45$ cap points each. It follows that for some co-ordinate $x_{1}$ of $C$, the point count of $C$ for $(\widetilde{x}_{1},x_{1})$ is
\[\left(\begin{array}{ccc}
18 & * & 18\\
9 & * & 9\\
18 & * & 18
\end{array}\right)\]
and the four $18$-cap $4$-flats $(\widetilde{x}_{1},x_{1}) = (\pm 1,\pm 1)$ are all $882A_{2}$ caps.
\end{lemma}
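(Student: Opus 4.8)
The plan is to reduce the statement to a single combinatorial fact about the maximal $5$-cap, and then to establish that fact by a symmetry-assisted finite search. First I would note that the two $5$-flats $\widetilde{x}_{1} = \pm 1$ are subcaps of $C$ of the maximal possible size $45$, so by Theorem \ref{n5s42to45} each is a $45$-cap $5$-flat; in particular each is isomorphic to the (essentially unique) maximal cap, and by Lemma \ref{n5s45hypclassify} each carries a distinguished collection of exactly $45$ \emph{type-(ii)} hyperplane directions, namely those $\{18,9,18\}$ directions whose two $18$-caps are both $882A_{2}$ caps. Since $\widetilde{x}_{1} = 1$ and $\widetilde{x}_{1} = -1$ are parallel, they share the same $4$-dimensional direction space $W$, so their hyperplane directions lie in the same set of $121$ directions; I would regard the two type-(ii) collections as two $45$-element subsets $T_{+}$ and $T_{-}$ of that common set.

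Next I would show that producing the required co-ordinate $x_{1}$ is equivalent to finding a single direction in $T_{+} \cap T_{-}$. Given $\bar{m} \in T_{+} \cap T_{-}$, the $\{18,9,18\}$ decomposition of each $5$-flat in direction $\bar{m}$ has a unique $9$-cap $4$-flat; call these $P_{+}$ and $P_{-}$. I would then choose $x_{1}$ to have linear part restricting to $\bar{m}$ on $W$ and to vanish on both $P_{+}$ and $P_{-}$. Such an $x_{1}$ exists and is unique up to sign: writing a candidate as $x_{1} = x_{1}^{0} + t\,\widetilde{x}_{1} + s$ for a fixed $x_{1}^{0}$ with the prescribed direction, the conditions $x_{1}(P_{+}) = x_{1}(P_{-}) = 0$ form two linear equations in $(t,s)$ whose coefficient matrix has determinant $2 \neq 0$ over $\mathbb{Z}/3\mathbb{Z}$ (because $\widetilde{x}_{1} = \pm 1$ on the two $5$-flats), hence a unique solution; and $x_{1}$ is automatically independent of $\widetilde{x}_{1}$ since $\bar{m} \neq 0$. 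For this $x_{1}$ the four corner $4$-flats $(\widetilde{x}_{1},x_{1}) = (\pm 1,\pm 1)$ are precisely the four $882A_{2}$ caps furnished by $\bar{m}$, the entries $(\pm 1,0)$ are the two $9$-caps, and the row sums over the end $5$-flats give the asserted point count with the middle column left unspecified.

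It therefore remains to prove that $T_{+} \cap T_{-} \neq \emptyset$, and this is the crux. Because both $5$-flats are copies of the unique maximal cap, $T_{-}$ is the image of $T_{+}$ under an element of the group acting on the $121$ directions, and the relative position of the two caps is determined only up to the action of the symmetry group of the cap on each side; by the transitivity statement of Lemma \ref{n5s45hypclassify} this reduces the problem to checking, over a transversal of the relevant double cosets, that no image of the $45$-set $T_{+}$ is disjoint from $T_{+}$. The hard part is exactly this disjointness claim, for which a naive pigeonhole fails: one has $|T_{+}| + |T_{-}| = 90 < 121$, so two type-(ii) sets could a priori be disjoint, and indeed two maximal $5$-caps can be stacked in parallel $5$-flats with an empty middle $5$-flat to form a genuine $90$-cap $6$-flat, so the cross-fibre cap condition contributes nothing beyond each fibre being a cap. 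The real content is thus the purely geometric assertion that no two copies of the type-(ii) direction set of the maximal $5$-cap are disjoint, which I would verify by a direct computer search in the spirit of Lemma \ref{n4s18882A2repl3pts}, using the transitivity of Lemma \ref{n5s45hypclassify} to cut down the configurations; a finer structural description, for instance via the distinguished line direction $L$ of Lemma \ref{n5s45hypclassify} or the parallelism constraints of Lemma \ref{n5882855para}, would serve only to shorten that search.
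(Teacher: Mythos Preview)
Your reduction is correct and coincides with the paper's: both arguments come down to showing that the two $45$-element sets of type-(ii) $\{18,9,18\}$ hyperplane directions carried by the two maximal $5$-caps must share a common member, and your construction of the co-ordinate $x_{1}$ from such a common direction is exactly what the paper does implicitly.

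Where you diverge is in the proof of the non-disjointness itself. You propose a brute-force search over a transversal of double cosets in $GL_{5}(\mathbb{F}_{3})$, treating $T_{+}$ and $T_{-}$ as $45$-element subsets of the $121$ projective directions. The paper instead doubles each direction to its two nonzero dual vectors, obtaining $90$-point subsets $S_{-1}$, $S_{1}$ of the affine $5$-space $F$ of dual vectors, and then studies the \emph{hyperplane point counts} of the pair $(S_{a},F)$ itself. A single computer check on one representative reveals that $(S_{a},F)$ has hyperplane directions only of types $\{45,45,0\}$, $\{36,36,18\}$, $\{30,30,30\}$, $\{27,27,36\}$, with the unique $\{45,45,0\}$ direction dual to the axis line $L$ and with the $9$- and $18$-point $3$-flats in the $\{27,27,36\}$ directions having a specific shape; from this structure the paper derives the contradiction to disjointness by a short counting argument in $F$ (parallel axes give $81<90$; non-parallel axes force an impossible overlap of a $9$-cap $3$-flat with the complement of three lines). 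So the paper replaces your double-coset enumeration by a one-time structural verification plus a hand argument. Your approach would succeed but gives less insight and does not expose the role of the axis direction $L$; the paper's passage from $45$ directions to $90$ dual vectors is precisely what makes the hyperplane-counting machinery applicable to the direction set itself, and that is the idea you are missing.
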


\begin{proof}
Each $45$-cap $5$-flat $\widetilde{x}_{1} = \pm 1$ has exactly $90$ dual vectors that correspond to $\{18,9,18\}$ hyperplane directions in which the $18$-cap $4$-flats are two $882A_{2}$ caps (two dual vectors for each of $45$ such hyperplane directions). Translate (the arguments of) such dual vectors of $\widetilde{x}_{1} = -1$ (respectively, $\widetilde{x}_{1} = 1$) to obtain a set $S_{-1}$ (respectively, $S_{1}$) of $90$ dual vectors of the $5$-flat $\widetilde{x}_{1} = 0$. Write $F$ for the space of all $3^{5}$ dual vectors of the $5$-flat $\widetilde{x}_{1} = 0$.

For each $a \in \{\pm 1\}$, among the $121$ hyperplane directions of $(S_{a},F)$, the numbers of $\{45,45,0\}$, $\{36,36,18\}$, $\{30,30,30\}$, and $\{27,27,36\}$ hyperplane directions are respectively $1$, $10$, $90$, and $20$. The $\{45,45,0\}$ hyperplane direction $D$ of $(S_{a},F)$ corresponds via duality to the axis line direction $L$ of the $45$-cap $5$-flat $\widetilde{x}_{1} = a$: two different dual vectors $f_{1}$ and $f_{2}$ in $F$ are in the same hyperplane in $D$ if and only if, in the hyperplane direction corresponding to the dual vector $f_{1} - f_{2}$, each hyperplane is a union of lines in $L$. It was verified using a representative $45$-cap $5$-flat that if $y$ and $z$ are co-ordinates of $F$ corresponding to the $\{45,45,0\}$ hyperplane direction and any $\{27,27,36\}$ hyperplane direction respectively of $(S_{a},F)$ such that $y$ and $z$ take the value $0$ at the origin of $F$, then the following statements hold: the point count of $(S_{a},F)$ for $(y,z)$ is
\[\left(\begin{array}{ccc}
9 & 0 & 18\\
18 & 0 & 18\\
18 & 0 & 9
\end{array}\right) \textrm{ or } \left(\begin{array}{ccc}
18 & 0 & 9\\
18 & 0 & 18\\
9 & 0 & 18
\end{array}\right),\]
each $18$-point $3$-flat of the form $(y,z) = (b,c)$ is the complement of a union of three disjoint lines, and each $9$-point $3$-flat of the form $(y,z) = (b,c)$ is a $9$-cap $3$-flat.

Suppose that $S_{-1}$ and $S_{1}$ are disjoint; we derive a contradiction.

If the $45$-cap $5$-flats $\widetilde{x}_{1} = \pm 1$ have parallel axis line directions, then $(S_{-1},F)$ and $(S_{1},F)$ have the same hyperplane direction $D$ as a $\{45,45,0\}$ hyperplane direction, so in each $4$-flat of $D$ that does not contain $0 \in F$, there are $45$ points in each of $S_{-1}$ and $S_{1}$, which is impossible since the number of points in each $4$-flat is $3^{4} = 81 < 90 = 2(45)$. So the axis line directions of the $45$-cap $5$-flats $\widetilde{x}_{1} = \pm 1$ are not parallel.

Let the co-ordinates $y_{-1}$ and $y_{1}$ of $F$ correspond via duality to the axis line directions of $\widetilde{x}_{1} = -1$ and $\widetilde{x}_{1} = 1$ respectively, and suppose that $y_{-1}$ and $y_{1}$ take the value $0$ at the origin of $F$. For each $a \in \{\pm 1\}$, in each hyperplane $y_{a} = \pm 1$: there are $45$ points in $S_{a}$ and at least $27$ points in $S_{-a}$, so there are at most $9$ points in neither $S_{a}$ nor $S_{-a}$ (because $S_{-1}$ and $S_{1}$ are disjoint). Therefore, among the $3^{5} - 2(90) = 63$ points in $F - S_{-1} - S_{1}$, at least $63 - 4(9) = 27$ points are in the $3$-flat $(y_{-1},y_{1}) = (0,0)$, which has exactly $3^{3} = 27$ points (as every $3$-flat does); it follows that all the inequalities in this paragraph are equalities.

Therefore, the $y_{-1}$-hyperplane direction of $(S_{1},F)$ and the $y_{1}$-hyperplane direction of $(S_{-1},F)$ have point count $\{27,27,36\}$, so the point counts of $(S_{-1},F)$ and $(S_{1},F)$ for $(y_{-1},y_{1})$ are
\[\left(\begin{array}{ccc}
a & 0 & 27 - a\\
18 & 0 & 18\\
27 - a & 0 & a
\end{array}\right) \textrm{ and }
\left(\begin{array}{ccc}
b & 18 & 27 - b\\
0 & 0 & 0\\
27 - b & 18 & b
\end{array}\right)\]
respectively, for some $a \in \{9,18\}$ and some $b \in \{9,18\}$. Since $S_{-1}$ and $S_{1}$ are disjoint and each $3$-flat has $27$ points in total, we have $a + b \leq 27$ and $(27 - a) + (27 - b) \leq 27$, so $27 \leq a + b \leq 27$, so $a + b = 27$, so $\{a,b\} = \{9,18\}$.

Now the $3$-flat $(y_{-1},y_{1}) = (-1,1)$ in $(S_{-1},F)$ and the $3$-flat $(y_{-1},y_{1}) = (-1,1)$ in $(S_{1},F)$ are, in some order, a $9$-cap $3$-flat and the complement of a union of three disjoint lines, so their $9$- and $18$-point sets are not disjoint, so $S_{-1}$ and $S_{1}$ are not disjoint. We obtain a contradiction.
\end{proof}

\begin{lemma}\label{no18071805173gon}
(a) Suppose that the $4$-flat point count of $C$ for some independent pair $(x_{1},x_{2})$ of co-ordinates is
\[\left(\begin{array}{ccc}
18 & 7 & 18\\
7 & a & *\\
18 & * & *
\end{array}\right),\]
and that each of the $18$-cap $4$-flats $(x_{1},x_{2}) = (-1,-1)$, $(x_{1},x_{2}) = (-1,1)$, and $(x_{1},x_{2}) = (1,1)$ is an $882A_{2}$. It follows that $a \leq 4$. Moreover, if $a \geq 1$, then the relative positions of those three $18$-cap $4$-flats are determined up to isomorphism.

(b) Suppose that the $4$-flat point count of $C$ for some independent pair $(x_{1},x_{2})$ of co-ordinates is
\[\left(\begin{array}{ccc}
18 & 7 & 18\\
7 & a & *\\
17 & * & *
\end{array}\right),\]
and that each of the $18$-cap $4$-flats $(x_{1},x_{2}) = (\pm 1,1)$ is an $882A_{2}$. It follows that $a \leq 4$.
\end{lemma}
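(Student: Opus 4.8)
The plan is to turn the bound on $a$ into a sumset inequality, using the midpoint count $n(Q)$ from the proof of Lemma~\ref{n5Double882A2Min3}. Among the three $882A_{2}$ caps, exactly one pair lies in $4$-flats that are symmetric about the central $4$-flat $(x_{1},x_{2})=(0,0)$; call these two caps $P$ and $P'$ and identify the central $4$-flat with $V=(\mathbb{Z}/3\mathbb{Z})^{4}$, so that $P,P'\subseteq V$. A point $Q\in V$ can be a cap point of $C$ only if no line meeting all three collinear $4$-flats has its two outer points in $P$ and $P'$, that is, only if $n(Q)=0$. Since $n(Q)\ge 1$ exactly when $Q\in-(P+P')$, there are $81-|P+P'|$ admissible points, so $a\le 81-|P+P'|$ and it suffices to prove $|P+P'|\ge 77$.

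Next I would use the two $7$'s to pin down how $P$ and $P'$ sit relative to the third $882A_{2}$ cap $P_{0}$ (the shared corner). Each $7$ is the central slice of a line of three $4$-flats with outer slices $P_{0}$ and one of $P,P'$, so the same midpoint count gives $|P_{0}+P|\le 74$ and $|P_{0}+P'|\le 74$. By Lemma~\ref{n5Double882A2Min3} with no outer point removed, two $882A_{2}$ flanks that are translations (respectively point reflections) of each other leave $0$ (respectively $4$) admissible midpoints, i.e.\ have sumset of size $81$ (respectively $77$); as $74<77$, neither $P$ nor $P'$ is a translation or a point reflection of $P_{0}$. Writing $P=M_{1}P_{0}+c_{1}$ and $P'=M_{2}P_{0}+c_{2}$ with $M_{i}\in\mathrm{GL}_{4}(\mathbb{Z}/3\mathbb{Z})$ (well defined modulo the linear automorphisms of $P_{0}$, which fix every sumset size in play), the hypotheses read $|P_{0}+M_{i}P_{0}|\le 74$, while the quantity to bound is $|P+P'|=|M_{1}P_{0}+M_{2}P_{0}|=|P_{0}+M_{1}^{-1}M_{2}P_{0}|$.

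I would then fix the representative $882A_{2}$ of Thackeray~\cite[Figure~42]{T21} as $P_{0}$ and, reusing the symmetry-reduced search already behind Lemmas~\ref{n4s18882A2repl3pts} and~\ref{n5Double882A2Min3}, enumerate the finitely many linear maps $M$, up to the linear automorphism group of $P_{0}$, with $|P_{0}+MP_{0}|\le 74$. For each admissible ordered pair $(M_{1},M_{2})$ one computes $|P_{0}+M_{1}^{-1}M_{2}P_{0}|$ and checks it is at least $77$, giving $a\le 4$. The ``moreover'' falls out of the same table: retaining only the pairs with $|P_{0}+M_{1}^{-1}M_{2}P_{0}|\le 80$, i.e.\ those that admit at least one admissible central point so that $a\ge 1$ is possible, should leave a single orbit under the automorphisms of $P_{0}$, which is exactly the assertion that the relative positions of the three $882A_{2}$ caps are then determined up to isomorphism.

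For part~(b) the corner slice is only a $17$-cap $R$, and no line through the centre has an $882A_{2}$ at both ends; I would bound $a$ along the line whose ends are the surviving $882A_{2}$, say $P$, and $R$, so that $a\le 81-|P+R|$. The column through $R$ and the corner cap $P_{0}$ still has central count $7$, giving $|P_{0}+R|\le 74$, and $P=M_{1}P_{0}+c_{1}$ with $M_{1}$ admissible as above, so $|P+R|=|M_{1}P_{0}+R|$; finishing again by a symmetry-reduced search, now ranging $R$ over all $17$-caps with $|P_{0}+R|\le 74$, verifies $|M_{1}P_{0}+R|\ge 77$ and hence $a\le 4$. Because $R$ is merely a $17$-cap the configuration is no longer rigid, which is why part~(b) omits the determinacy clause. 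The main obstacle is exactly this enumeration: Lemma~\ref{n5Double882A2Min3} governs only the translation and reflection regimes, whereas the $7$'s force $P$ and $P'$ into the remaining regime, so the crux is to verify, over all admissible $(M_{1},M_{2})$, that composing two such placements through the shared corner never leaves more than four admissible points on the central line -- and, in part~(b), that replacing one flank by a $17$-cap does not break this. Any admissible pair that nonetheless violated the bound would have to be discarded on the further ground that $C$ is a genuine $6$-cap with the stated counts, but I expect the raw sumset check to suffice; keeping the automorphism group of $P_{0}$ in front throughout is what keeps the case list short enough to check.
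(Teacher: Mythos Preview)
Your sumset reformulation is correct as far as it goes, but it misses the structural shortcut that makes the paper's proof a few lines long, and as written your argument is a plan rather than a proof.

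The key observation you are not using is that the column $x_{1}=-1$ and the row $x_{2}=1$ are each $5$-flats carrying $18+7+18=43$ cap points. By Theorem~\ref{n5s42to45} each is a $45$-cap $5$-flat minus two cap points, the two missing points lying in the middle $4$-flat (where the $7$ sits). Hence in each of those two $5$-flats the pair of $882A_{2}$ hyperplanes sits exactly as in a genuine $45$-cap, and Lemma~\ref{n5882855para} applies: the $882$ cube direction of one $882A_{2}$ is parallel to the $855$ cube direction of the other, their nine-$2$s $2$-flat directions are parallel, and the side/diagonal directions of the standard squares swap. Chaining these constraints through the shared corner $P_{0}$ at $(-1,1)$ forces the anti-diagonal pair $P=(x_{1},x_{2})=(1,1)$ and $P'=(x_{1},x_{2})=(-1,-1)$ to be translations or point reflections \emph{of each other}. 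Lemma~\ref{n5Double882A2Min3} with $n=0$ then gives $a=0$ in the translation case and $a\le 4$ in the reflection case, and the uniqueness clause follows because the reflection case pins down the whole triangle of $882A_{2}$'s.

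Your approach extracts from the $7$'s only the inequality $|P_{0}+M_{i}P_{0}|\le 74$. That is strictly weaker than ``the $5$-flat is a $45$-cap minus two points'': it says only that at least seven points of the middle $4$-flat avoid all midpoints, not that some seven of them extend the two $882A_{2}$ flanks to a cap. So your proposed enumeration ranges over a larger set of $M$'s than the lemma actually allows, and you have no argument that the sumset bound $|P_{0}+M_{1}^{-1}M_{2}P_{0}|\ge 77$ survives on that larger set; you would be relying on a computer check you have not carried out, and one that may well fail for spurious $M$'s that the $43$-cap hypothesis excludes. For part~(b) this is worse: your search over all $17$-caps $R$ with $|P_{0}+R|\le 74$ is not a finite check one can reasonably do, and you are again discarding the crucial structure. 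The paper instead observes that the column $x_{1}=-1$ now has $18+7+17=42$ cap points with an $882A_{2}$ in the $18$-slice; by Theorem~\ref{n5s42to45} (and the fact that the $18$-cap in a $\{18,17,7\}$ direction of a $\triangle 686$ is not an $882A_{2}$) this $5$-cap is a $45$-cap minus three points, so the $17$-slice completes uniquely to an $882A_{2}$, and one is back in part~(a).

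In short, the gap is that you replace ``$43$-cap $5$-flat, hence $45$-cap minus two'' by the much weaker sumset condition $|P_{0}+P|\le 74$, and then defer the resulting (possibly false, certainly unverified) inequality to a search. Invoking Theorem~\ref{n5s42to45} and Lemma~\ref{n5882855para} instead gives the conclusion immediately and without any new computation.
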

\begin{proof}
(a) The $43$-cap $5$-flat $x_{1} = -1$ is a $45$-cap $5$-flat minus two cap points that are in the $4$-flat $(x_{1},x_{2}) = (-1,0)$, and the $43$-cap $5$-flat $x_{2} = 1$ is a $45$-cap $5$-flat minus two cap points  that are in the $4$-flat $(x_{1},x_{2}) = (0,1)$. From Lemma \ref{n5882855para}, it follows that the $18$-cap $4$-flats $(x_{1},x_{2}) = \pm(1,1)$ are translations or point reflections of each other. The former case yields $a = 0$, and the latter case yields $a \leq 4$ (in each case, avoid a line of three cap points).

(b) There is a unique noncap point $P$ in the $4$-flat $(x_{1},x_{2}) = (-1,-1)$ such that $P$ and the cap points in the $5$-flat $x_{1} = -1$ together form a cap in the $5$-flat $x_{1} = -1$. In that new cap, the $18$-cap $4$-flat $(x_{1},x_{2}) = (-1,-1)$ is an $882A_{2}$ by Thackeray \cite[Lemma 3.1]{T22}. Now argue as in part (a).
\end{proof}

\begin{proposition}[Refining \{43,*,43\}]\label{n6res3rd}
Choose an independent pair $(x_{1},x_{2})$ of co-ordinates for $C$. Suppose that the point count of $C$ for $(x_{1},x_{2})$ is among the following, where $a$, $b$, and $c$ are nonnegative integers, and each of the four $4$-flats $(x_{1},x_{2}) = (\pm 1,\pm 1)$ can be obtained from an $882A_{2}$ by removing at most two cap points.
\[\textrm{(a) } \left(\begin{array}{ccc}
18 & a & 18\\
7 & b & 7\\
18 & c & 18
\end{array}\right)
\textrm{ (b) } \left(\begin{array}{ccc}
18 & a & 18\\
7 & b & 8\\
18 & c & 17
\end{array}\right)
\textrm{ (c) } \left(\begin{array}{ccc}
18 & a & 18\\
8 & b & 8\\
17 & c & 17
\end{array}\right)\]
\[\textrm{(d) } \left(\begin{array}{ccc}
18 & a & 17\\
8 & b & 8\\
17 & c & 18
\end{array}\right)
\textrm{ (e) } \left(\begin{array}{ccc}
18 & a & 18\\
7 & b & 9\\
18 & c & 16
\end{array}\right)
\textrm{ (f) } \left(\begin{array}{ccc}
18 & a & 17\\
7 & b & 9\\
18 & c & 17
\end{array}\right)\]
\[\textrm{ (g) } \left(\begin{array}{ccc}
18 & a & 18\\
8 & b & 9\\
17 & c & 16
\end{array}\right)
\textrm{ (h) } \left(\begin{array}{ccc}
18 & a & 17\\
8 & b & 9\\
17 & c & 17
\end{array}\right)
\textrm{ (i) } \left(\begin{array}{ccc}
18 & a & 16\\
8 & b & 9\\
17 & c & 18
\end{array}\right)\]
In each case, $a + b + c \leq 22$ holds. The following restrictions hold for specific matrices.

(a) If $a + b + c \geq 19$, then $b \leq 4$.

(b) If $a + b + c \geq 20$, then $b \leq 4$.

(c) If $a + b + c = 22$, then $(a,b,c) = (6,7,9)$ or $(a,b,c) = (9,4,9)$.

(d) If $a + b + c \geq 21$, then $b \leq 4$.

(e) If $a + b + c = 22$, then $(a,b,c) = (4,9,9)$ or $(a,b,c) = (9,4,9)$.

(f) If $a + b + c = 22$, then $(a,b,c) = (9,4,9)$.

(g) If $a + b + c = 22$, then $(a,b,c)$ is one of $(4,9,9)$, $(6,7,9)$, and $(9,4,9)$.

(h) If $a + b + c = 22$, then $(a,b,c)$ is one of $(4,9,9)$, $(7,7,8)$, and $(9,4,9)$.

(i) If $a + b + c = 22$, then $(a,b,c)$ is one of $(9,9,4)$, $(9,6,7)$, and $(9,4,9)$.
\end{proposition}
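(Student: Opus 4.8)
The plan is to treat all nine matrices uniformly by reading off what each of the three nonzero columns tells us, then to chase the consequences through the structural lemmas already established. For each matrix, the first and third columns record the point counts of the $4$-flats $(x_{1},x_{2})=(\pm 1,\pm 1)$, all of which are (by hypothesis) obtained from an $882A_{2}$ by deleting at most two cap points; the middle column records the counts $a$, $b$, $c$ in the $4$-flats $(x_{1},x_{2})=(\pm 1,0)$, $(0,0)$, $(0,\pm 1)$. The global bound $a+b+c\le 22$ will come from the total: the $5$-flat $x_{1}=0$ (or a parallel $5$-flat) has at most $45$ cap points by Theorem \ref{n5s42to45}, so summing the middle column against the known outer columns forces $a+b+c$ below $23$. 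I would first dispose of this uniform inequality, since it needs only the $45$-cap bound on a $5$-dimensional subcap together with arithmetic on the three row-sums.

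The heart of the argument is the case-by-case sharpening, and here I would lean almost entirely on Lemma \ref{no18071805173gon} and Lemma \ref{n6res3rd}'s own Proposition \ref{n6res3rd}-free inputs, namely Lemmas \ref{n5882855para} and \ref{n5Double882A2Min3}. The idea is that whenever two of the four corner $4$-flats are genuine $882A_{2}$ caps sharing a common coordinate hyperplane, the relative position of their nine-$2$s directions, cube directions, and standard squares is rigid; Lemma \ref{n5882855para} then pins down how a $45$-cap $5$-flat glues two such $882A_{2}$ caps, and Lemma \ref{n5Double882A2Min3} bounds the number of cap points that can survive in the intermediate $4$-flat $x_{i}=0$ once a few points are deleted. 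Concretely, for a matrix whose outer entries are $18,18$ in a column I would invoke part (a) of Lemma \ref{n6180918882A2} to certify that the corner caps really are $882A_{2}$, then apply Lemma \ref{no18071805173gon}(a) or (b) to cap the middle entry of that column at $4$; the stated conclusions (e.g.\ that in case (f) only $(9,4,9)$ survives, or in case (c) only $(6,7,9)$ and $(9,4,9)$) then follow by combining the per-column bounds $a\le\{\dots\}$, $b\le 4$, $c\le\{\dots\}$ with the equality constraint $a+b+c=22$ and discarding combinations that would force a collinear triple across the $x_{i}=0$ slab.

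In more detail, I would process the matrices in an order that exploits shared structure: cases (a), (b), (e), (f) all have a column with two $18$s meeting a $7$, so Lemma \ref{no18071805173gon} applies directly to bound the associated middle entry, and the remaining freedom is then squeezed by $a+b+c=22$; cases (c), (d), (g), (h), (i) mix $18$s and $17$s, where I would first restore each $17$-cap $4$-flat to an $882A_{2}$ by adjoining its unique completing point (exactly as in the proof of Lemma \ref{no18071805173gon}(b)) and then run the same bounding argument on the restored configuration, correcting for the deleted point. For each surviving triple I would verify admissibility by checking against Lemma \ref{n5Double882A2Min3}: the parity of ``translation vs.\ point reflection'' of the two corner $882A_{2}$ caps in a column is determined by whether the middle entry is small (translation, giving $b\le 3$) or can be as large as $9$ (point reflection, via part (b) of that lemma), and this dichotomy is exactly what separates, say, $(4,9,9)$ from $(6,7,9)$.

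The main obstacle I anticipate is case (c) with its two distinct extremal answers $(6,7,9)$ and $(9,4,9)$: here both outer columns end in a $17$, so neither column immediately furnishes the clean $18,18,7$ pattern that Lemma \ref{no18071805173gon}(a) needs, and the completion-to-$882A_{2}$ trick must be applied on both sides simultaneously while keeping track of two independently deleted points. The risk is that the two completions are incompatible, so I would need to argue—using the rigidity in Lemma \ref{n5882855para} and the intersection data in Lemma \ref{n5Double882A2Min3}(b)(iii)—that the completions are forced into one of exactly two mutually consistent configurations, corresponding to the two listed triples. I expect the remaining cases to reduce to this one or to be strictly easier, so the bulk of the care goes into making the double-completion in case (c) rigorous.
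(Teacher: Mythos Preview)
Your derivation of the uniform bound $a+b+c\le 22$ does not work. The middle column \emph{is} the $5$-flat in question, so ``that $5$-flat has at most $45$ cap points'' yields only $a+b+c\le 45$; there is no way to ``sum the middle column against the outer columns'' to get $22$, since $C$ is an arbitrary $6$-cap with no a~priori size bound. In the paper, the inequality $a+b+c\le 22$ is not established first and then refined---it is the \emph{output} of the case analysis. The actual first step is to bound each of $a$, $b$, $c$ separately by~$9$, using Theorem~\ref{n5s42to45}: each row and each diagonal of the matrix is the hyperplane point count of some $5$-flat, and a $5$-cap of size $\ge 43$ must be a $45$-cap minus at most two points, which excludes counts like $\{18,10,18\}$ or $\{18,17,10\}$.

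Your choice of auxiliary lemmas is also off. Lemma~\ref{n6180918882A2} concerns two parallel $45$-cap $5$-flats and is irrelevant here (the outer columns sum to $43$, not $45$, and in any case the hypothesis already hands you the corner caps as near-$882A_{2}$'s). Lemma~\ref{n5Double882A2Min3} is not needed. The paper's proof, illustrated for matrix~(c), is a short trichotomy on~$b$: if $b\ge 8$, complete the $17$-cap corners to $882A_{2}$'s via Lemma~\ref{n4s18882A2repl3pts} and restrict to an L-shaped subcap, then Lemma~\ref{no18071805173gon} forces a contradiction with $c=9$, so $c\le 8$ and (symmetrically) $a\le 4$, whence $a+b+c\le 21$; if $5\le b\le 7$, Lemma~\ref{no18071805173gon} applied to the L-shape with apex at $(0,0)$ gives $a\le 6$, so $a+b+c\le 22$; if $b\le 4$, the bounds $a,c\le 9$ already give $22$. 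The listed extremal triples are then read off from which branch attains equality. There is nothing special about case~(c), no double-completion compatibility issue, and no need for the translation/point-reflection dichotomy you describe.
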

\begin{proof}
For all cases, each of $a$, $b$, and $c$ is at most $9$ (avoid a $5$-dimensional cap of size at least $43$ with an impossible hyperplane point count).

We consider matrix (c); argue similarly for the other matrices. Suppose $b \geq 8$ and $c \geq 9$. By Lemma \ref{n4s18882A2repl3pts}, the cap points in the $5$-flats $x_{1} = -1$, $x_{2} = -1$, and $x_{1} = -x_{2}$, together with some point in $(x_{1},x_{2}) = (-1,-1)$ and some point in $(x_{1},x_{2}) = (1,-1)$, form a cap with point count
\[\left(\begin{array}{ccc}
18 & 0 & 0\\
8 & b & 0\\
18 & 9 & 18
\end{array}\right),\]
which contradicts Lemma \ref{no18071805173gon}.

Therefore, we have the following. If $b \geq 8$, then $c \leq 8$ and (by a similar argument) $a \leq 4$, so $a + b + c \leq 4 + 9 + 8 = 21$. If $5 \leq b \leq 7$, then $a \leq 6$ by Lemma \ref{no18071805173gon}, so $a + b + c \leq 6 + 7 + 9 = 22$. If $b \leq 4$, then $a + b + c \leq 9 + 4 + 9 = 22$.
\end{proof}

\begin{proposition}[Refining 5-flat directions]\label{n645Star45}
We have the following.
\begin{itemize}
\item[(a)] In $C$, suppose that the two $5$-flats $\widetilde{x}_{1} = \pm 1$ have exactly $45$ cap points each. It follows that if the two $45$-cap $5$-flats $\widetilde{x}_{1} = \pm 1$ are not point reflections of each other, then the number of cap points in the $5$-flat $\widetilde{x}_{1} = 0$ is at most $6$. That figure of $6$ cannot be lowered.
\item[(b)] In $C$, suppose that the two $5$-flats $\widetilde{x}_{1} = \pm 1$ combined have at least $88$ cap points in total. It follows that if the two $45$-cap $5$-flats obtained by completing the $5$-flats $\widetilde{x}_{1} = \pm 1$ are not point reflections of each other, then the number of cap points in the $5$-flat $\widetilde{x}_{1} = 0$ is at most $14$.
\end{itemize}
\end{proposition}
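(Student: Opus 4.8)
The plan is to reduce both parts to the refined grid supplied by Lemma~\ref{n6180918882A2} and then to bound the central $5$-flat one $4$-flat at a time. First I would apply Lemma~\ref{n6180918882A2} -- in part (b), after completing the two $5$-flats $\widetilde{x}_1=\pm1$ to $45$-caps -- to produce a co-ordinate $x_1$ for which the $(\widetilde{x}_1,x_1)$ point count has four $882A_2$ corners at $(\widetilde{x}_1,x_1)=(\pm1,\pm1)$ and central column $\widetilde{x}_1=0$ split by $x_1$ into $4$-flats of sizes $p$, $q$, $r$; the quantity to be bounded is $p+q+r$. The two diagonal hyperplanes $\widetilde{x}_1=x_1$ and $\widetilde{x}_1=-x_1$ each meet the corners in one opposite pair of $882A_2$ caps and have the central $4$-flat (size $q$) as their $\widetilde{x}_1=0$ slice, whereas the axis hyperplanes $x_1=\pm1$ have the remaining, ``vertical'' corner pairs as ends and the $4$-flats of sizes $p$ and $r$ as their $\widetilde{x}_1=0$ slices.

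The first key step is a dichotomy linking the hypothesis to the diagonal pairs: a point reflection carrying one $45$-cap to the other must preserve the $\{18,9,18\}$ $x_1$-structure, so its centre has $x_1=0$, and then it sends each corner $(\pm1,a)$ to the opposite corner $(\mp1,-a)$. Hence, using the rigidity below for the converse, the two $45$-caps are point reflections of each other if and only if the two diagonal pairs of $882A_2$ corners are; so the hypothesis says exactly that the diagonal pairs are not point reflections. When those pairs are translations of each other, Lemma~\ref{n5Double882A2Min3}(a) applied to each diagonal hyperplane -- with $n=0$ in part (a), and with $n$ equal to the number of corner deletions (at most $2$) in part (b) -- bounds $q$ tightly; the point-reflection case would instead give the larger value realised by the $112$-cap, which is precisely what the hypothesis excludes.

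To decide which alternative occurs, and to dispose of any relative position that is neither a translation nor a reflection, I would invoke Thackeray~\cite[Lemma 3.1]{T22}: a $45$-cap with a chosen $\{18,9,18\}$ $882A_2$ direction is unique up to structure-preserving isomorphism, so the relative position of the two $45$-caps lies in a finite list of affine maps respecting the $x_1$-levels. The classification shows that every admissible non-point-reflection position has its diagonal corner pairs as translations, whence Lemma~\ref{n5Double882A2Min3}(a) forces $q$ to be small. The hardest part is then bounding the axis slices $p$ and $r$, whose ends are the vertical corner pairs; by Lemma~\ref{n5882855para} these are related inside a single $45$-cap by the side-to-diagonal correspondence and so are neither translations nor point reflections, placing them outside the reach of Lemma~\ref{n5Double882A2Min3}. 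I would instead control each axis hyperplane $x_1=\pm1$, a cap of size $36+p$ or $36+r$, through Theorem~\ref{n5s42to45}: a value of $p$ or $r$ as large as $7$ makes that hyperplane a $45$-cap minus at most two points carrying $882A_2$ ends, and analysing it in the $x_1$-direction is incompatible with a non-point-reflection relative position of the $\widetilde{x}_1=\pm1$ caps, so such values are excluded; Lemma~\ref{n4s18882A2repl3pts} rules out the borderline near-replacements. Assembling the restrictions on $p$, $q$, $r$ over the finitely many admissible positions yields $p+q+r\le6$, and an explicit non-reflection configuration shows that $6$ is attained.

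For part (b) the same scheme runs with the two $45$-caps replaced by their completions, from which at most $n\le2$ cap points in total have been deleted. Each diagonal and axis hyperplane now has its $882A_2$ ends depleted by at most the number of deletions among its two end $4$-flats, so the invocations of Lemma~\ref{n5Double882A2Min3} and Theorem~\ref{n5s42to45} use the corresponding relaxed bounds; summing these relaxations across the three central $4$-flats raises the total from $6$ to at most $14$. The main obstacle throughout is the asymmetry between $q$ and the pair $p,r$: the central $4$-flat is governed by the diagonal pairs, which lie inside the scope of Lemma~\ref{n5Double882A2Min3}, but $p$ and $r$ are governed by the side-to-diagonal-related vertical pairs and therefore require the auxiliary argument via Theorem~\ref{n5s42to45} and the finite configuration classification, which is also what makes the total bound as tight as $6$.
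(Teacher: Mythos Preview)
Your plan diverges from the paper's proof and, as written, has a real gap. The paper does not decompose the middle $5$-flat into $p,q,r$ and bound each piece structurally; instead, after Lemma~\ref{n6180918882A2} fixes the $x_{1}$-direction, it freezes $\widetilde{x}_{1}=-1$ as a standard $45$-cap and performs a computer enumeration over every invertible linear map $T$ (preserving the $x_{1}$-levels) that can position the second $45$-cap. For each $T$ it records the number $n_{0}$ (respectively $n_{2}$) of points in $\widetilde{x}_{1}=0$ that are midpoints of zero (respectively at most two) cap segments spanning the outer $5$-flats, and finds that the non-point-reflection $T$'s fall into types with $(n_{0},n_{2})\in\{(0,45),(6,6),(2,14)\}$ or $n_{0}\le 5$, $n_{2}\le 13$. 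The bounds $6$ and $14$ are read off directly; the $(0,45)$ translation type is disposed of for part~(b) by the observation that its $45$ single-segment midpoints come from pairwise disjoint segments, so removing two outer cap points frees at most two of them. Lemmas~\ref{n5Double882A2Min3} and~\ref{n5882855para} are not invoked at all here.

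The specific failure in your outline is the assertion that ``every admissible non-point-reflection position has its diagonal corner pairs as translations.'' This is false already in the simplest non-reflection case, where the two $45$-caps are translations of one another (type~(i) in the paper's enumeration). There the corner $(1,1)$ is a translate of the corner $(-1,1)$; but inside a single $45$-cap the two $882A_{2}$ hyperplanes have, by Lemma~\ref{n5882855para}, their $882$ and $855$ cube directions interchanged, so $(-1,1)$ and $(-1,-1)$ are related neither by a translation nor by a point reflection. Composing, the diagonal pair $\{(-1,-1),(1,1)\}$ is in neither alternative and Lemma~\ref{n5Double882A2Min3} does not apply. Your converse dichotomy is also unjustified: even if both diagonal pairs are point reflections, nothing you cite forces the two centres to coincide, which is what is needed to conclude the full $45$-caps are point reflections. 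Finally, the step ``$p\ge 7$ makes the axis hyperplane a $45$-cap minus two points, and this is incompatible with a non-point-reflection relative position'' is asserted rather than proved; knowing that all four vertical and horizontal $882A_{2}$ pairs obey the $882$/$855$ swap constrains directions but not translations, and does not by itself force a point reflection. Once you fall back on ``assembling the restrictions over the finitely many admissible positions,'' you are doing the paper's computer search, and the intermediate structural claims are neither needed nor, as stated, correct.
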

\begin{proof}
(a) In $C$, a co-ordinate $x_{1}$ can be chosen as in Lemma \ref{n6180918882A2}. Therefore, without loss of generality, there are co-ordinates $x_{2}$ to $x_{5}$ of $C$, with $(\widetilde{x}_{1},x_{1},\ldots,x_{5})$ independent, such that (i) the $5$-flat $\widetilde{x}_{1} = -1$ is Figure 3 of Thackeray \cite{T22} in $(x_{1},\ldots,x_{5})$ co-ordinates and (ii) the $5$-flat $\widetilde{x}_{1} = 1$ is the image under an invertible linear map $T$ of the $5$-flat $\widetilde{x}_{1} = -1$ in $(x_{1},\ldots,x_{5})$ co-ordinates such that $T$ sends each $4$-flat $(\widetilde{x}_{1},x_{1}) = (-1,a)$ to the $4$-flat $(\widetilde{x}_{1},x_{1}) = (1,a)$.

A computer search examined each $T$ in turn. For each $T$, the program found
\begin{itemize}
\item[(i)] the number $n_{0}$ of points $P$ in the $5$-flat $\widetilde{x}_{1} = 0$ such that $P$ is not a midpoint of a line segment with one endpoint in the $5$-flat $\widetilde{x}_{1} = -1$ and the other endpoint in the $5$-flat $\widetilde{x}_{1} = 1$, and
\item[(ii)] the number $n_{2}$ of points $Q$ in the $5$-flat $\widetilde{x}_{1} = 0$ such that $Q$ is a midpoint of at most two such line segments.
\end{itemize}
For each $T$, the program checked whether $n_{0} \geq 6$ holds and $n_{2} \geq 14$ holds; if at least one of those was found to hold, then the program displayed the data of $T$ as well as $n_{0}$ and $n_{2}$. It was thus determined that for each $T$, exactly one of the following is true: (i) $(n_{0},n_{2}) = (0,45)$ (there are $8$ maps $T$ in this case), (ii) $(n_{0},n_{2}) = (22,22)$ (there are $8$ maps $T$ in this case), (iii) $(n_{0},n_{2}) = (6,6)$ (there are $32$ maps $T$ in this case), (iv) $(n_{0},n_{2}) = (2,14)$ (there are $176$ maps $T$ in this case), or (v) both $n_{0} \leq 5$ and $n_{2} \leq 13$ hold.

For each of the $8$ maps $T$ in case (i), the $45$-cap $5$-flat $\widetilde{x}_{1} = 1$ is the image of the $45$-cap $5$-flat $\widetilde{x}_{1} = -1$ under some translation map $U$ from the $6$-flat of $C$ to itself. Since each of the $45$-cap $5$-flats $\widetilde{x}_{1} = \pm 1$ is complete (that is, it cannot be enlarged to form a new cap of the same dimension and greater size by adding cap points), it follows that the $45$ points $Q$ form the image of the $45$-cap $5$-flat $\widetilde{x}_{1} = 1$ under $U$, each of the $45$ points $Q$ is the midpoint of exactly one cap line segment with one endpoint in each of the two $5$-flats $\widetilde{x}_{1} = \pm 1$ (namely, the line segment $U(Q)U^{2}(Q)$), and those $45$ cap line segments are pairwise disjoint.

For each of the $8$ maps $T$ in case (ii), the two $45$-cap $5$-flats $\widetilde{x}_{1} = \pm 1$ are point reflections of each other, the $22$ points $P$ form a cap, and if we combine that cap with the $45$-cap $5$-flats $\widetilde{x}_{1} = \pm 1$, then we obtain a $112$-cap $6$-flat.

An example in case (iii) is in Figure \ref{fign6s96}. For each of the $32$ maps $T$ in case (iii), the six points $P = Q$ form a cap and we obtain a complete $96$-cap $6$-flat. Using the computer search results, it was verified that those $96$-cap $6$-flats are all pairwise isomorphic via isomorphisms that preserve $\widetilde{x}_{1}$ and preserve or negate $x_{1}$; therefore, given any $\{45,45,6\}$ hyperplane direction $D$ of any complete $96$-cap $6$-flat, there is an isomorphism from that cap to Figure \ref{fign6s96} that sends the hyperplanes in $D$ to the $5$-flats $\widetilde{x}_{1} = -1$, $\widetilde{x}_{1} = 0$, and $\widetilde{x}_{1} = 1$ respectively.

\begin{figure}[tbph]
\centering
\includegraphics{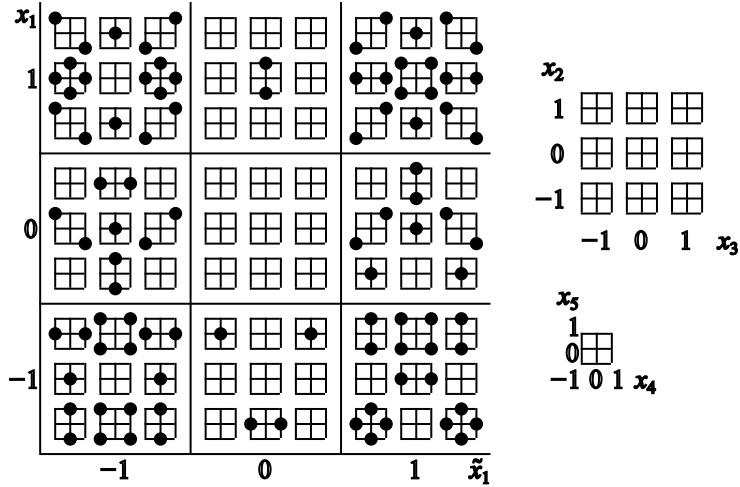}
\caption{A $96$-cap $6$-flat.\label{fign6s96}}
\end{figure}

Part (a) of the Proposition follows, using the information about points $P$ above.

(b) This follows from Theorem \ref{n5s42to45} and the information about points $Q$ in the proof of part (a).
\end{proof}

\begin{proposition}[Refining 5-flat directions]
We have the following.
\begin{itemize}
\item[(a)] In $C$, suppose that the $5$-flats $\widetilde{x}_{1} = -1$ and $\widetilde{x}_{1} = 1$ have exactly $45$ and $40$ cap points respectively, the $40$-cap $5$-flat $\widetilde{x}_{1} = 1$ is the $40$-cap $5$-flat in a $\{40,36,36\}$ hyperplane direction of some $112$-cap $6$-flat, and that $40$-cap $5$-flat has an $\{18,18,4\}$ hyperplane direction in which the $18$-cap $4$-flats are $882A_{2}$ caps. It follows that the number of cap points in the $5$-flat $\widetilde{x}_{1} = 0$ is at most $3$. That figure of $3$ cannot be lowered.
\item[(b)] In $C$, suppose that the union of the two $5$-flats $\widetilde{x}_{1} = \pm 1$ can be obtained from the union of those two $5$-flats in part (a) by removing at most two cap points. It follows that the number of cap points in the $5$-flat $\widetilde{x}_{1} = 0$ is at most $13$.
\end{itemize}
\end{proposition}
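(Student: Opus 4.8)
The plan is to follow the template of Proposition~\ref{n645Star45}, with the second $45$-cap replaced by the prescribed $40$-cap. Because the $40$-cap $5$-flat $\widetilde{x}_{1} = 1$ is the $40$-cap of a $\{40,36,36\}$ hyperplane direction of the unique $112$-cap $6$-flat, its structure is determined up to isomorphism by Lemma~\ref{n6s112str}; I would use the hypothesised $\{18,18,4\}$ hyperplane direction whose $18$-cap $4$-flats are $882A_{2}$ caps as a co-ordinate $x_{1}$ and place the $40$-cap in a fixed standard position. Identifying the parallel $5$-flats $\widetilde{x}_{1} = \pm 1$ by the $\widetilde{x}_{1}$-translation, I would then take the $45$-cap $\widetilde{x}_{1} = -1$ to be the image of the standard representative $45$-cap of Thackeray~\cite{T22} under an invertible linear map $T$, so that each configuration is recorded by a single $T$. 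Pre-composing $T$ with symmetries of the $45$-cap and post-composing with symmetries that fix the standardised $40$-cap, the maps $T$ that must be examined form a finite list: the hyperplane direction of the $45$-cap that $T$ carries to the $x_{1}$-direction ranges over only finitely many orbits under the symmetries of the $45$-cap (understood through Lemma~\ref{n5s45hypclassify}), and for each such choice the alignment of the three $x_{1}$-slices contributes only finitely many further possibilities. This reduces both parts to a computer search, exactly as in Proposition~\ref{n645Star45}. In the sub-case where $x_{1}$ is also a $\{18,9,18\}$ $882A_{2}$ direction of the $45$-cap, so that all four $4$-flats $(\widetilde{x}_{1},x_{1}) = (\pm 1,\pm 1)$ are $882A_{2}$ caps, Lemma~\ref{n5882855para} pins down how their standard squares, cube directions and nine-$2$s directions must line up.

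For part (a) I would use the midpoint bookkeeping of Proposition~\ref{n645Star45}: a point $Q$ of $\widetilde{x}_{1} = 0$ can belong to the cap only if it is not the midpoint of a cap line segment with one endpoint in each of $\widetilde{x}_{1} = \pm 1$, since otherwise those three collinear points would all be cap points. Thus the count in $\widetilde{x}_{1} = 0$ is at most the number $n_{0}$ of non-midpoint points, and for each $T$ the program computes $n_{0}$. I expect to verify that $n_{0} \leq 3$ for every $T$, which gives the bound; the ``cannot be lowered'' claim follows by displaying one map $T$ attaining $n_{0} = 3$ for which the three non-midpoint points themselves form a cap, so that together with the two $5$-flats they constitute a genuine $88$-cap $6$-flat.

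For part (b) the two $5$-flats are obtained from the $45$- and $40$-caps of part (a) by deleting at most two cap points, so I would reuse the midpoint data of part (a). For each $T$ I would record the multiplicity $n(Q)$ of each point $Q$ of $\widetilde{x}_{1} = 0$, namely the number of cap line segments of which $Q$ is the midpoint, and set $n_{2} = |\{Q : n(Q) \leq 2\}|$. A point $Q$ can join the cap only when every one of its segments is broken by a deletion; distinct segments through a fixed $Q$ have no common endpoint, so two deletions break at most two of them, forcing $n(Q) \leq 2$. Hence the count in $\widetilde{x}_{1} = 0$ is at most $n_{2}$, and I expect the search to yield $n_{2} \leq 13$ for every $T$, with Theorem~\ref{n5s42to45} guaranteeing that the depleted $45$-cap side, still of size at least $43$, retains the structure on which the enumeration of $T$ rests.

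The main obstacle will be setting up the search so that it is demonstrably exhaustive: I must verify that fixing the $40$-cap and enumerating the orbits of the $x_{1}$-direction together with the finitely many slice alignments really captures every relative position of the two $5$-flats up to symmetries preserving $\widetilde{x}_{1}$, so that no configuration escapes the search. A secondary difficulty, as in Proposition~\ref{n645Star45}, is that the bound ``count $\leq n_{2}$'' can be loose for a few exceptional maps $T$ whose cap line segments are almost pairwise disjoint, so that $n_{2}$ is large yet two deletions free only a handful of midpoints; for those I would refine the estimate by the explicit segment analysis of Lemma~\ref{n5Double882A2Min3}, determining directly which points two specific deletions can free and confirming that the total never exceeds $13$.
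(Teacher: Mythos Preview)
Your proposal is correct and matches the paper's approach almost exactly: the paper verifies that the symmetries of the $40$-cap act transitively on its ten $\{18,18,4\}$ directions, fixes one such direction as $x_{1}$, runs the computer search of Proposition~\ref{n645Star45} over each of the four orbit types (i)--(iv) of Lemma~\ref{n5s45hypclassify} for the $x_{1}$-direction of the $45$-cap (giving the two point-count matrices with left column $(18,9,18)$ or $(15,15,15)$), and reports $n_{0}\le 3$ and $n_{2}\le 13$ for every $T$, with an example attaining three non-collinear points $P$. Your anticipated fallback via Lemma~\ref{n5Double882A2Min3} for exceptional $T$ with large $n_{2}$ turns out to be unnecessary, since the search already gives $n_{2}\le 13$ uniformly.
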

\begin{proof}
It was verified that if $\widetilde{C}$ is the $40$-cap $5$-flat $\widetilde{x}_{1} = 1$ as specified in part (a), then the number of $\{18,18,4\}$ hyperplane directions of $\widetilde{C}$ is $10$ and the symmetries of $\widetilde{C}$ act transitively on the set of $\{18,18,4\}$ hyperplane directions of $\widetilde{C}$.

Computer searches were performed as in the proof of Proposition \ref{n645Star45}, where the point count of $C$ for $(\widetilde{x}_{1},x_{1})$ is
\[\left(\begin{array}{ccc}
18 & * & 18\\
9 & * & 4\\
18 & * & 18
\end{array}\right)
\textrm{ or }
\left(\begin{array}{ccc}
15 & * & 18\\
15 & * & 4\\
15 & * & 18
\end{array}\right).\]
A representative $\{18,4,18\}$ direction of the $40$-cap $5$-flat $\widetilde{x}_{1} = 1$ was used as the $x_{1}$-hyperplane direction of that $5$-dimensional subcap, and each of Lemma \ref{n5s45hypclassify}'s four options (i) to (iv) for the $x_{1}$-hyperplane direction of the $45$-cap $5$-flat $\widetilde{x}_{1} = -1$ was considered in turn by the computer search.

For each $T$, the number of points $P$ was at most $3$, and the number of points $Q$ was at most $13$. The search found examples such that there are exactly three points $P$ and those points are not collinear.
\end{proof}

\begin{proposition}[Refining \{45,*,42\}, part 1]\label{n645Star42Tri686}
In $C$, suppose that the numbers of cap points in the two $5$-flats $\widetilde{x}_{1} = -1$ and $\widetilde{x}_{1} = 1$ are $45$ and $42$ respectively, and that the $42$-cap $5$-flat $\widetilde{x}_{1} = 1$ is a $\triangle{}686$. It follows that the number of cap points in the $5$-flat $\widetilde{x}_{1} = 0$ is at most $18$.
\end{proposition}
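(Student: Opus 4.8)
The plan is to reduce the statement to a single additive inequality about the two outer slices. Identify the three parallel $5$-flats $\widetilde{x}_{1} = -1, 0, 1$ with a common $\mathbb{F}_{3}^{5}$ by translation along $\widetilde{x}_{1}$, and write $B$ for the $45$-cap $\widetilde{x}_{1} = -1$, write $T$ for the $\triangle{}686$ in $\widetilde{x}_{1} = 1$, and write $M$ for the cap in the middle $5$-flat $\widetilde{x}_{1} = 0$. Any line of $C$ that meets all three slices has its three $\widetilde{x}_{1}$-coordinates equal to $-1$, $0$, $1$ in some order (a line has either equal or pairwise distinct first coordinates, and of the possible coordinate sets only $\{-1,0,1\}$ and the singletons sum to $0$); since lines with two points in one slice and one in another are therefore impossible, the only obstruction between the slices is that a point $q$ of the middle slice is collinear with some $p \in B$ and $r \in T$ exactly when $p + q + r = 0$. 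Hence $q$ can lie in $M$ only if $-q \notin B + T$. Writing $Z = \{q : -q \notin B + T\}$, we get $M \subseteq Z$ and $|Z| = 3^{5} - |B + T| = 243 - |B + T|$, so it suffices to show that $|B + T| \geq 225$ for every placement of a $45$-cap $B$ and a $\triangle{}686$ $T$ in $\mathbb{F}_{3}^{5}$.

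To establish this, I would first pin down the relative position. The quantity $|B + T|$ is unchanged by translating $B$, by translating $T$, and by applying one element of $\mathrm{GL}_{5}(\mathbb{F}_{3})$ to both sets at once. Since the $45$-cap and the $\triangle{}686$ are each unique up to affine isomorphism (the latter by Theorem \ref{n5s42to45}), I would fix a representative $45$-cap as $B$ and let $T$ range over the $\mathrm{GL}_{5}(\mathbb{F}_{3})$-images of a fixed representative $\triangle{}686$, with the translation of $T$ left free; the value of $|B + T|$ then depends only on the double coset of the linear part modulo the stabiliser of $B$ on the left and the stabiliser of $T$ on the right. This is the crux: a priori the linear part ranges over all of $\mathrm{GL}_{5}(\mathbb{F}_{3})$, which is far too large to enumerate directly. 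The reduction I would use is to align a distinguished hyperplane direction of $B$ with a distinguished one of $T$, forcing the linear part to preserve a common flag, and then to quotient by the two stabilisers. The transitivity in Lemma \ref{n5s45hypclassify} for the four categories of hyperplane directions of a $45$-cap, together with the orbit data of Table \ref{tbl686dirs} for the $\triangle{}686$, should make those stabilisers large and the flag condition restrictive enough to cut the enumeration to a small finite list of essentially different placements -- exactly as the search in the proof of Proposition \ref{n645Star45} was cut down by requiring the gluing map to respect the $x_{1}$-slicing.

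For each placement on that list I would compute $n(q) = |\{(p,r) \in B \times T : p + r = -q\}|$ for all $243$ points $q$ of the middle slice, count the number $|Z|$ of $q$ with $n(q) = 0$, and confirm that $|Z| \leq 18$ in every case, with $18$ attained so that the bound is sharp; as $M \subseteq Z$, this gives the Proposition. The main obstacle is the one just flagged, namely arranging the symmetry reduction so that the search over relative positions is genuinely finite and small rather than of size comparable to $|\mathrm{GL}_{5}(\mathbb{F}_{3})|$. A secondary point is to handle any placements for which $|Z|$ exceeds $18$: there I would additionally use that $M$ is itself a cap and that the hyperplane point counts it induces must be compatible with Table \ref{tbl686dirs} and with the $45$-cap structure of Lemma \ref{n5s45hypclassify}, which should remove the extra candidates and restore the bound of $18$.
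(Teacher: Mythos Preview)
Your plan is a genuine proof \emph{strategy}, but it is not the paper's proof, and the paper's route avoids precisely the obstacle you flag as ``the crux''.

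The paper does not attempt to control $|B+T|$ or to enumerate relative positions of the two $5$-caps at all. Instead it refines by a \emph{second} hyperplane direction chosen inside the $\triangle{}686$. By Table~\ref{tbl686dirs}, a $\triangle{}686$ has a $\{20,16,6\}$ hyperplane direction; pick $x_{1}$ so that the $(\widetilde{x}_{1},x_{1})$ point-count of $C$ has right column $(20,6,16)$. The left column is then either $(18,9,18)$ or $(15,15,15)$, because those are the only hyperplane profiles of a $45$-cap $5$-flat (Lemma~\ref{n5s45hypclassify}). Now each row is a $5$-flat with a prescribed pair of hyperplane $4$-flat sizes, and the middle entry is bounded by the existing classification of large $5$-caps (Theorem~\ref{n5s42to45} and the computer table cited from \cite{T22}): the two matrices give $a+b+c\le 13$ and $a+b+c\le 18$ respectively. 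No new search is run; the whole argument is a few lines of bookkeeping against previously tabulated data.

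By contrast, your approach needs a search over double cosets in $\mathrm{GL}_{5}(\mathbb{F}_{3})$, and your reduction (``align a distinguished hyperplane direction of $B$ with one of $T$'') is not yet an algorithm: you have not specified which directions to align, nor argued that the resulting flag stabiliser cuts the coset space to a manageable size, nor verified that $|Z|\le 18$ actually holds in every case (if it fails, your fallback via the cap condition on $M$ is only sketched). None of this is fatal in principle, but it leaves real work undone that the paper simply sidesteps by choosing the $\{20,16,6\}$ refinement. The moral: when one of the outer slices has a rare, highly constrained hyperplane profile, slicing along it and quoting the $5$-dimensional classification is far cheaper than a sumset computation over all relative positions.
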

\begin{proof}
The $42$-cap $5$-flat $\widetilde{x}_{1} = 1$ has a $\{20,16,6\}$ hyperplane direction, so there is a co-ordinate $x_{1}$ of $C$ with $(\widetilde{x}_{1},x_{1})$ independent such that the point count of $C$ for $(\widetilde{x}_{1},x_{1})$ is
\[\left(\begin{array}{ccc}
18 & a & 20\\
9 & b & 6\\
18 & c & 16
\end{array}\right) \textrm{ or } \left(\begin{array}{ccc}
15 & a & 20\\
15 & b & 6\\
15 & c & 16
\end{array}\right).\]
The computer search of Thackeray \cite[Table 1]{T22}, the classification of $44$-cap $5$-flats in Thackeray \cite{T22}, and the classification of $42$-cap $5$-flats above imply that
\begin{itemize}
\item[(i)] The first matrix yields $a \leq 2$, $b \leq 2$, and $c \leq 9$, so $a + b + c \leq 13$; and
\item[(ii)] The second matrix yields $a \leq 6$, $b \leq 6$, and $c \leq 6$, so $a + b + c \leq 18$. \qedhere
\end{itemize}
\end{proof}

\begin{proposition}[Refining \{45,*,42\}, part 2]\label{n645Star4245Min3}
In $C$, suppose that the numbers of cap points in the $5$-flats $\widetilde{x}_{1} = -1$, $\widetilde{x}_{1} = 1$, and $\widetilde{x}_{1} = 0$ are respectively $45$, $42$, and at least $20$. Suppose that the $42$-cap $5$-flat $\widetilde{x}_{1} = 1$ is a $45$-cap $5$-flat minus three cap points. It follows that (i) the $5$-flat $\widetilde{x}_{1} = 0$ has at most $22$ cap points, (ii) the $42$-cap $5$-flat $\widetilde{x}_{1} = 1$ is a point reflection of the $45$-cap $5$-flat $\widetilde{x}_{1} = -1$ minus three cap points, and (iii) $C$ is a subcap of a $112$-cap $6$-flat.
\end{proposition}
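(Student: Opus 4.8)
The plan is to reduce to the two-full-caps situation of Proposition \ref{n645Star45} by completing the $42$-cap. Since the $42$-cap $\widetilde{x}_1 = 1$ is a $45$-cap minus three cap points (and not a $\triangle{}686$), Theorem \ref{n5s42to45} lets me add back three points $B_1, B_2, B_3$ to obtain a full $45$-cap $\widehat{C}_1$ in the $5$-flat $\widetilde{x}_1 = 1$; write $C_{-1}$ for the $45$-cap $\widetilde{x}_1 = -1$. A point $Q$ of the middle $5$-flat $\widetilde{x}_1 = 0$ can be a cap point of $C$ exactly when it is not the midpoint of a line segment joining a cap point of $C_{-1}$ to one of the actual $42$ cap points; for the completed pair I let $n(Q)$ count the segments from a cap point of $C_{-1}$ to a cap point of $\widehat{C}_1$ with midpoint $Q$, so that removing $B_1, B_2, B_3$ unblocks exactly those $Q$ all of whose such segments end at a removed point. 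All three conclusions will follow once I show that a middle of size at least $20$ forces $(C_{-1}, \widehat{C}_1)$ into the point-reflection (equivalently, $112$-cap) case of Proposition \ref{n645Star45}.

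First I would eliminate the non-point-reflection cases. Reusing the set-up of Proposition \ref{n645Star45}, I fix $C_{-1}$ as the representative $45$-cap and write $\widehat{C}_1 = T(C_{-1})$ as $T$ ranges over the same structure-preserving invertible linear maps; the cases in which $(C_{-1}, \widehat{C}_1)$ is not a point reflection are the translation case and cases (iii), (iv), (v) of that proposition, each of which has at most $n_0 \leq 6$ points with $n(Q) = 0$. Extending that computer search to range also over the choice of removed triple $\{B_1, B_2, B_3\}$ (reduced to orbit representatives under the symmetries of $\widehat{C}_1$), I would compute, for each such $T$ and each triple, the set of addable points and the largest cap it contains. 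The expectation, generalizing the bound of $14$ in Proposition \ref{n645Star45}(b) from two to three removals, is that this largest cap never reaches $20$ — a bound of at most $19$ should come from the $n_0 \leq 6$ permanently addable points together with the few low-$n(Q)$ points that three removals can unblock. Under the standing hypothesis that the middle has at least $20$ cap points, this excludes every non-point-reflection $T$ and proves conclusion (ii).

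It remains to treat the point-reflection case, in which the addable points of the completed pair form the $22$-cap $M$ whose union with $C_{-1}$ and $\widehat{C}_1$ is the $112$-cap of Proposition \ref{n645Star45}(a). Running the same search over removed triples, I would verify that even after three removals the largest cap among addable middle points is still at most $22$, giving (i). Conclusion (iii) is where the main obstacle lies: removing $B_1, B_2, B_3$ can unblock points $Q \notin M$, and such a $Q$ is collinear with a point of $C_{-1}$ and a removed point, so if the middle used $Q$ then $C$ would fail to sit inside the $112$-cap $C_{-1} \cup \widehat{C}_1 \cup M$. Unlike the non-point-reflection cases, a crude size bound does not settle matters here, since the addable set after removal can be large; I must instead show that the only way to build a cap of size at least $20$ from the addable points is to stay inside $M$. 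I would therefore have the search record, for each removed triple, whether any addable cap of size at least $20$ meets the complement of $M$, and confirm that none does — appealing to the rigidity of the $45$-cap symmetries and the uniqueness of the $112$-cap (Potechin \cite{P08}, with Lemma \ref{n6s112str}) to interpret the outcome. Granting this, $C \subseteq C_{-1} \cup \widehat{C}_1 \cup M$ is a subcap of a $112$-cap, which is (iii).
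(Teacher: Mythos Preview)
Your strategy---complete the $42$-cap side and extend the computer search of Proposition \ref{n645Star45} to range over all removed triples $\{B_1,B_2,B_3\}$---is a genuinely different route from the paper's, and it is plausible in outline, but as written it is a sketch resting on unverified expectations rather than a proof. The paper performs no search over removed triples. Instead it refines by a second coordinate $x_1$ via Lemma \ref{n6180918882A2}: the two $5$-flats $\widetilde{x}_1 = \pm 1$ (after completion) admit a common $\{18,9,18\}$ hyperplane direction in which all four $18$-cap $4$-flats are $882A_2$ caps, producing a $3 \times 3$ point-count matrix in $(\widetilde{x}_1, x_1)$. The six possible shapes of the right-hand column (according to where the three missing points sit) are then handled by hand using Lemmas \ref{n4s18882A2repl3pts}, \ref{n5882855para}, and especially \ref{n5Double882A2Min3}, which bounds the middle-column entries and forces the four $882A_2$ pieces into the point-reflection pattern of Figure \ref{fign6s112rep}. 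Conclusions (i), (ii), (iii) all drop out of this rigidity simultaneously; what your search would have to discover by enumeration, the paper pins down structurally at the $4$-flat level.

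Two concrete weak spots in your plan deserve mention. First, your hoped-for bound of $19$ in the non-reflection cases appeals to the $n_0$ and $n_2$ data of Proposition \ref{n645Star45}, but three removals on one side can unblock points $Q$ with $n(Q) = 3$, and no $n_3$ count was ever recorded there; so a genuinely new computation is required, not just a reading-off of the old one. Second, for conclusion (iii) you correctly identify the real obstacle (addable middle points outside $M$) but offer only ``have the search confirm that none does''---an assertion that the answer will be favourable. The paper's route sidesteps this entirely: once the $882A_2$ pieces are fixed, the square-of-midpoints argument forces every cap point of the middle $4$-flat $(\widetilde{x}_1,x_1)=(0,0)$ to lie in the translated square $M$, and comparing with Figure \ref{fign6s112rep} then yields the containment in a $112$-cap directly.
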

\begin{proof}
By Lemma \ref{n6180918882A2}, there is a co-ordinate $x_{1}$ of $C$ with $(\widetilde{x}_{1},x_{1})$ independent such that the union of the two $5$-dimensional caps $\widetilde{x}_{1} = \pm 1$ in $C$ is obtained by removing at most three cap points from a $90$-cap $6$-flat $\widetilde{C}$, with the same underlying $6$-flat as $C$, such that the point count of $\widetilde{C}$ for $(\widetilde{x}_{1},x_{1})$ is
\[\left(\begin{array}{ccc}
18 & 0 & 18\\
9 & 0 & 9\\
18 & 0 & 18
\end{array}\right)\]
and the four $18$-cap $4$-flats $(\widetilde{x}_{1},x_{1}) = (\pm 1,\pm 1)$ in $\widetilde{C}$ are all $882A_{2}$ caps. From the $(18,9,18)$ columns of that point count matrix, it follows that for each $a \in \{\pm 1\}$, the $882$ cube direction of each of the two $882A_{2}$ caps $(\widetilde{x}_{1},x_{1}) = (a,\pm 1)$ in $\widetilde{C}$ is parallel to the $855$ cube direction of the other $882A_{2}$.

It follows that without loss of generality (possibly negating $x_{1}$), the point count of $C$ for $(\widetilde{x}_{1},x_{1})$ is among
\[\begin{array}{c}
\left(\begin{array}{ccc}
18 & a & 18\\
9 & b & 6\\
18 & c & 18
\end{array}\right), \left(\begin{array}{ccc}
18 & a & 18\\
9 & b & 7\\
18 & c & 17
\end{array}\right), \left(\begin{array}{ccc}
18 & a & 18\\
9 & b & 8\\
18 & c & 16
\end{array}\right),\\
\left(\begin{array}{ccc}
18 & a & 18\\
9 & b & 9\\
18 & c & 15
\end{array}\right), \left(\begin{array}{ccc}
18 & a & 17\\
9 & b & 8\\
18 & c & 17
\end{array}\right), \textrm{ and } \left(\begin{array}{ccc}
18 & a & 17\\
9 & b & 9\\
18 & c & 16
\end{array}\right).
\end{array}\]
In each case, we have $a \leq 9$ and $c \leq 9$ (no $5$-dimensional cap of size at least $43$ has a hyperplane direction with point count $\{18,*,10\}$).

In $C$, if each of the $5$-flats $x_{1} = 1$, $\widetilde{x}_{1} - x_{1} = 0$, $\widetilde{x}_{1} + x_{1} = 0$, and $x_{1} = -1$ has at most $41$ cap points, then each of the six point count matrices yields $a + b + c \leq 19$ and a contradiction. (For example, the last of the six matrices yields $a \leq 41 - (18 + 17) = 6$, $b \leq 41 - (18 + 17) = 6$, and $c \leq 41 - (18 + 16) = 7$, so $a + b + c \leq 6 + 6 + 7 = 19$.) Therefore, at least one of the four caps in the $5$-flats $x_{1} = 1$, $\widetilde{x}_{1} - x_{1} = 0$, $\widetilde{x}_{1} + x_{1} = 0$, and $x_{1} = -1$ of $C$ has at least $42$ cap points.

Let $C_{5}$ be such a $5$-dimensional cap. From the point count matrices above, $C_{5}$ has an $\{18,18,*\}$, $\{18,17,*\}$, $\{18,16,*\}$, or $\{18,15,*\}$ hyperplane direction in which the $18$-, $17$-, $16$-, and $15$-cap $4$-flats are $882A_{2}$ caps minus at most three cap points each. It follows that $C_{5}$ is a $45$-cap $5$-flat minus at most three cap points: in each $\{18,18,6\}$ or $\{18,17,7\}$ hyperplane direction of a $\triangle{}686$, each $18$-cap $4$-flat is a $963B$ or a $981C$ respectively, not an $882A_{2}$, and a $\triangle{}686$ has neither $\{18,16,8\}$ nor $\{18,15,9\}$ hyperplane directions.

By Lemma \ref{n4s18882A2repl3pts}, there is a unique way to add cap points to the $4$-flats $(\widetilde{x}_{1},x_{1}) = (1,\pm 1)$ of $C$ to obtain $882A_{2}$ caps, so those $882A_{2}$ caps must be the caps in the same positions in $\widetilde{C}$. Therefore, by Lemma \ref{n5882855para}, the nine-$2$s $2$-flat directions of the $882A_{2}$ caps $(\widetilde{x}_{1},x_{1}) = (\pm 1,\pm 1)$ in $\widetilde{C}$ are parallel to one another, and the $882$ and $855$ cube directions of each of those four $882A_{2}$ caps are parallel to those directions, in some order, of each other such $882A_{2}$ cap.

Suppose that the underlying $5$-flat of $C_{5}$ is $\widetilde{x}_{1} - x_{1} = 0$ or $\widetilde{x}_{1} + x_{1} = 0$. We have $b \leq 9$, and Lemma \ref{n5882855para} implies the following: the two $882A_{2}$ caps $(\widetilde{x}_{1},x_{1}) = (\pm 1,-1)$ of $\widetilde{C}$ are translations or point reflections of each other, and the two $882A_{2}$ caps $(\widetilde{x}_{1},x_{1}) = (\pm 1,1)$ of $\widetilde{C}$ are translations or point reflections of each other. Lemma \ref{n5Double882A2Min3} yields $a \leq 4$ and $c \leq 5$, so $a + b + c \leq 4 + 9 + 5 = 18$, so we have a contradiction.

Therefore, the underlying $5$-flat of $C_{5}$ is $x_{1} = 1$ or $x_{1} = -1$. Lemma \ref{n5882855para} implies that the $882A_{2}$ caps $(\widetilde{x}_{1},x_{1}) = \pm(1,-1)$ in $\widetilde{C}$ are translations or point reflections of each other, and the $882A_{2}$ caps $(\widetilde{x}_{1},x_{1}) = \pm(1,1)$ in $\widetilde{C}$ are translations or point reflections of each other. We have $b \leq 4$ by Lemma \ref{n5Double882A2Min3}, so $a + b + c \leq 9 + 4 + 9 = 22$, which yields (i).

We have $a + c \geq 20 - 4 = 16$, so $(a,c)$ is among $(7,9)$, $(8,8)$, $(9,7)$, $(8,9)$, $(9,8)$, and $(9,9)$. If $a + c = 16$, then $b \geq 20 - 16 = 4$; if $\{a,c\} = \{8,9\}$, then $b \geq 20 - 17 = 3$; and if $a = c = 9$, then $b \geq 20 - 18 = 2$.

By Lemma \ref{n5Double882A2Min3}, the $882A_{2}$ caps $(\widetilde{x}_{1},x_{1}) = \pm(1,1)$ in $\widetilde{C}$ are point reflections of each other (in each of the six point count matrices, the bottom-left-to-top-right diagonal is $(18,b,18)$ or $(18,b,17)$, and $b \geq 2$). Therefore, the square of midpoints of $(\widetilde{x}_{1},x_{1}) = (1,1)$ in $\widetilde{C}$ is the image of the square of midpoints of $(\widetilde{x}_{1},x_{1}) = (-1,-1)$ in $\widetilde{C}$ under some translation map $U$, and all cap points in the $4$-flat $(\widetilde{x}_{1},x_{1}) = (0,0)$ in $C$ are in the image $M$ under $U$ of the square of midpoints of the $882A_{2}$ cap $(\widetilde{x}_{1},x_{1}) = (1,1)$ in $\widetilde{C}$. The $2$-flat $F$ that contains $M$ is a translation of a $2$-flat in the nine-$2$s $2$-flat directions of the $882A_{2}$ caps $(\widetilde{x}_{1},x_{1}) = (\pm 1,\pm 1)$ in $\widetilde{C}$.

Suppose $b \geq 3$. The $882A_{2}$ caps $(\widetilde{x}_{1},x_{1}) = \pm(1,-1)$ in $\widetilde{C}$ must be point reflections of each other (if they were translations of each other, then at most two of the cap points of $C$ in $(\widetilde{x}_{1},x_{1}) = (0,0)$ could be in $F$). Without loss of generality, let the $5$-flat $\widetilde{x}_{1} = -1$ of $C$ be as in Figure \ref{fign6s112rep}. The $18$-cap $4$-flat $(x_{1},x_{2}) = (1,1)$ of $\widetilde{C}$ is as in that figure without loss of generality, and the $18$-cap $4$-flat $(x_{1},x_{2}) = (1,\pm 1)$ of $\widetilde{C}$ is a translation of the $18$-cap $4$-flat $(x_{1},x_{2}) = (1,\pm 1)$ in the figure by some vector $v$. The square $M$ consists of the cap points in the $4$-flat $(x_{1},x_{2}) = (0,0)$ in the figure. All cap points in the $4$-flat $(x_{1},x_{2}) = (0,0)$ of $C$ are among those of $M$, and among those in the image of $M$ under translation by $-v$. If two squares that are translations of each other intersect in at least three points, then they coincide, so $v = 0$. That implies parts (ii) and (iii).

We may therefore assume $b = 2$; it follows that $a = c = 9$.

If the $882A_{2}$ caps $(\widetilde{x}_{1},x_{1}) = \pm(1,-1)$ in $\widetilde{C}$ are translations of each other, then the $9$-cap $4$-flats of $C$ in $(\widetilde{x}_{1},x_{1}) = (-1,0)$ and $(\widetilde{x}_{1},x_{1}) = (0,-1)$ determine the positions of the standard squares of the $882A_{2}$ cap of $\widetilde{C}$ in $(\widetilde{x}_{1},x_{1}) = (1,1)$, and in all cases we obtain at least one line of three cap points and a contradiction.

Therefore, the $882A_{2}$ caps $(\widetilde{x}_{1},x_{1}) = \pm(1,-1)$ in $\widetilde{C}$ are point reflections of each other, and arguing as before using the $4$-flats $(\widetilde{x}_{1},x_{1}) = (-1,0)$ and $(\widetilde{x}_{1},x_{1}) = (0,-1)$, we obtain parts (ii) and (iii).
\end{proof}

\begin{theorem}\label{n6s110}
Every $110$-cap $6$-flat is a $112$-cap $6$-flat minus two cap points.
\end{theorem}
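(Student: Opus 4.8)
The plan is to locate a hyperplane direction of $C$ whose two outer $5$-flats are $45$-caps, and then to exploit the rigidity of that configuration. Suppose first that $C$ has a $\{45,20,45\}$ hyperplane direction $\widetilde{x}_{1}$, so that the middle $5$-flat $\widetilde{x}_{1}=0$ has exactly $110-90=20$ cap points. Since $20>6$, Proposition \ref{n645Star45}(a) forces the two $45$-cap $5$-flats $\widetilde{x}_{1}=\pm 1$ to be point reflections of each other; this is precisely case (ii) in the proof of that proposition, in which the $22$ points of $\widetilde{x}_{1}=0$ that are not midpoints of cap line segments crossing between $\widetilde{x}_{1}=-1$ and $\widetilde{x}_{1}=1$ form a cap and, together with the two $45$-caps, a $112$-cap $\mathcal{T}$. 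Every cap point of $C$ in $\widetilde{x}_{1}=0$ must be one of those $22$ points, since a cap point that was such a midpoint would create three collinear cap points. Hence $C\subseteq\mathcal{T}$, and as $|C|=110=112-2$ we conclude that $C$ is a $112$-cap $6$-flat minus two cap points.

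It therefore suffices to produce such a direction, and I would reduce this to a statement about the smallest hyperplane. Let $\mu$ be the minimum number of cap points in any $5$-flat of $C$; the direction containing a smallest hyperplane has its remaining two hyperplanes summing to $110-\mu$ with each at most $45$, so each has size at least $65-\mu$. When $\mu\le 22$ this lower bound is at least $43$, so the two outer hyperplanes are $45$-caps minus at most two points (Theorem \ref{n5s42to45}) and hence complete to $45$-caps, and their combined size $110-\mu$ is at least $88$. Proposition \ref{n645Star45}(b) then applies: as the middle has $\mu>14$ cap points, the two completing $45$-caps are point reflections of each other, and one recovers the $112$-cap structure as in the first paragraph --- the case $\mu=20$, where both outer hyperplanes are genuine $45$-caps, being the cleanest, while for $\mu\in\{21,22\}$ one must additionally check that the $\mu$ middle points and the at most two missing outer points all lie in the unique completion $\mathcal{T}$. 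Thus the whole theorem reduces to the single inequality $\mu\le 22$.

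The heart of the argument, and its main obstacle, is ruling out $\mu\ge 23$, in which regime every hyperplane of $C$ has between $23$ and $45$ cap points. This balanced range is invisible to global counting: averaging $|H|$ and $|H|^{2}$ over the $364$ hyperplane directions is consistent with every hyperplane being smaller than $42$, so the existence of a large hyperplane cannot be forced by first- or second-moment identities alone. I would instead work through the largest hyperplane $H$: once $|H|\ge 42$ it is classified by Theorem \ref{n5s42to45} as a $\triangle{}686$ or a $45$-cap minus at most three points, and the point counts in the direction of $H$ are then sharply constrained, so that the refinement propositions can engage. Proposition \ref{n6res3rd} bounds the middle $5$-flat of the relevant $\{43,\ast,43\}$-type directions below $24$, and Propositions \ref{n645Star42Tri686} and \ref{n645Star4245Min3} show that $C$ has no $\{45,\ast,42\}$ direction at all (the middle would need $23$ cap points, exceeding the bounds $18$ and $22$ those propositions give); in each such configuration the middle $5$-flat is forced below the value $110$ minus the two outer sizes demands, a contradiction. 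The delicate remaining point --- the one I expect to be genuinely hard --- is to guarantee, when $\mu\ge 23$, that the largest hyperplane is itself large enough (ultimately of size $45$) for one of these refinements to apply; since the moment identities do not supply this, its resolution should require the finer $4$-flat information (every $4$-flat has at most $20$ cap points) or a targeted computer search of the kind used throughout the preceding propositions.
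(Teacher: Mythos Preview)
Your treatment of the case $\mu \le 22$ is essentially correct and parallels the paper's handling of the point counts $\{45,45,20\}$, $\{45,44,21\}$, $\{45,43,22\}$, $\{44,44,22\}$. The ``additional check'' you flag for $\mu \in \{21,22\}$ is in fact immediate from the datum $(n_0,n_2)=(22,22)$ in case~(ii) of the proof of Proposition~\ref{n645Star45}: in the point-reflection configuration every point of the middle $5$-flat outside the distinguished $22$-set is the midpoint of at least three cap segments of the completed pair of $45$-caps, so after at most two outer cap points are removed it remains the midpoint of at least one such segment and cannot be a cap point of $C$. Hence $C\subseteq\mathcal{T}$ automatically, and this part of your argument goes through.

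The genuine gap is your step 2, and it is not as intractable as you fear. Your assertion that first and second moment identities cannot force a large hyperplane when $\mu\ge 23$ overlooks the standard-diagram technique of \cite{T21}, which the paper invokes directly (Figure~\ref{fign6stddiag}). This is precisely a convexity argument combining the moment identities with the known bound $\max\le 45$, and it yields outright that every $110$-cap $6$-flat has a hyperplane direction whose point count lies in the explicit list $\{45,45,20\}$, $\{45,44,21\}$, $\{45,43,22\}$, $\{44,44,22\}$, $\{45,42,23\}$, $\{44,43,23\}$, $\{43,43,24\}$. The first four give $\mu\le 22$ and feed into your step~1; the last three are then eliminated one by one --- $\{45,42,23\}$ by Propositions~\ref{n645Star42Tri686} and~\ref{n645Star4245Min3} as you observed, $\{44,43,23\}$ by passing to the completed $45$-caps via Lemma~\ref{n6180918882A2} and noting that the resulting point-count matrix dominates one from Proposition~\ref{n6res3rd} (which caps the middle at $22$), and $\{43,43,24\}$ by Proposition~\ref{n6res3rd} together with a short case analysis using Lemma~\ref{n5Double882A2Min3} for four residual matrices. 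No new $4$-flat information or computer search is needed; the ``genuinely hard'' obstacle you anticipate dissolves once the standard diagram is on the table, so your proof is incomplete only because you discounted a tool that was already available.
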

\begin{proof}
Let $C$ be a $110$-cap $6$-flat. From the standard diagram in Figure \ref{fign6stddiag}, every $110$-cap $6$-flat has at least one $\{45,45,20\}$, $\{45,44,21\}$, $\{45,43,22\}$, $\{44,44,22\}$, $\{45,42,23\}$, $\{44,43,23\}$, or $\{43,43,24\}$ hyperplane direction. Choose $\widetilde{x}_{1}$ to be a co-ordinate of such a hyperplane direction $D$ so that the numbers of cap points in the $5$-flats $\widetilde{x}_{1} = -1$, $\widetilde{x}_{1} = 1$, and $\widetilde{x}_{1} = 0$ are in nonincreasing order.

\begin{figure}
\includegraphics{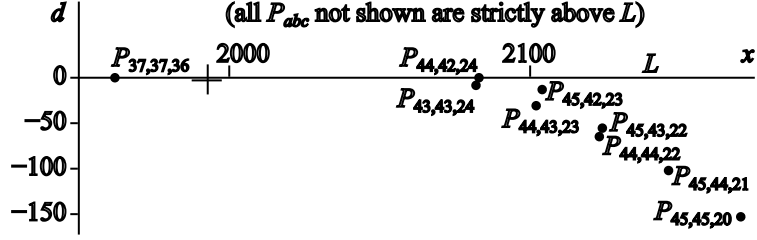}
\caption{Standard diagram for $(n,s) = (6,110)$. The line $L$ has equation $121y = 4068x - 5237136$ and passes through $P_{44,42,24}$ and $P_{37,37,36}$.\label{fign6stddiag}}
\end{figure}

If $D$ has point count $\{45,45,20\}$, $\{45,44,21\}$, $\{45,43,22\}$, or $\{44,44,22\}$, then by Proposition \ref{n645Star45}, the $5$-flats $\widetilde{x}_{1} = \pm 1$ are subcaps of $45$-cap $5$-flats that are point reflections of each other, and we can argue as before to show that (i) for some co-ordinate $x_{1}$ of $C$, the point count of $C$ for $(\widetilde{x}_{1},x_{1})$ can be obtained from
\[\left(\begin{array}{ccc}
18 & 9 & 18\\
9 & 4 & 9\\
18 & 9 & 18
\end{array}\right)\]
by subtracting $2$ in total from the nine entries combined, and (ii) $C$ is a $112$-cap $6$-flat minus two cap points.

If $D$ has point count $\{45,42,23\}$, then Propositions \ref{n645Star42Tri686} and \ref{n645Star4245Min3} yield a contradiction.

If $D$ has point count $\{44,43,23\}$, then by Lemma \ref{n6180918882A2}, there is a co-ordinate $x_{1}$ of $C$ such that the point count of $C$ for $(\widetilde{x}_{1},x_{1})$ (possibly after negating $\widetilde{x}_{1}$) can be obtained from some point count matrix in Proposition \ref{n6res3rd} by adding $1$ to some entry in the left or right column, so by that theorem, the $5$-flat $\widetilde{x}_{1} = 0$ of $C$ has at most $22$ cap points, and we obtain a contradiction.

Suppose that $D$ has point count $\{43,43,24\}$. Arguing as before, for some co-ordinate $x_{1}$ of $C$, the point count of $C$ for $(\widetilde{x}_{1},x_{1})$ (possibly after negating $\widetilde{x}_{1}$) is, without loss of generality, either among the point count matrices in Proposition \ref{n6res3rd} -- in which case, by that theorem, the $5$-flat $\widetilde{x}_{1} = 0$ of $C$ has at most $22$ cap points, and we have a contradiction -- or among the following four point count matrices where each of the $4$-dimensional caps $(\widetilde{x}_{1},x_{1}) = (\pm 1,\pm 1)$ can be completed to an $882A_{2}$:
\[\left(\begin{array}{ccc}
17 & a & 17\\
9 & b & 9\\
17 & c & 17
\end{array}\right), \left(\begin{array}{ccc}
17 & a & 18\\
9 & b & 9\\
17 & c & 16
\end{array}\right), \left(\begin{array}{ccc}
16 & a & 18\\
9 & b & 9\\
18 & c & 16
\end{array}\right), \left(\begin{array}{ccc}
18 & a & 18\\
9 & b & 9\\
16 & c & 16
\end{array}\right).\]
In each case, we have $b \leq 9$, and if $b \geq 6$ then the completed $882A_{2}$ caps in $(\widetilde{x}_{1},x_{1}) = (\pm 1,1)$ are translations or point reflections of each other, and similarly for $(\widetilde{x}_{1},x_{1}) = (\pm 1,-1)$, so $a \leq 5$ and $c \leq 5$, so $a + b + c \leq 5 + 9 + 5 = 19$, which gives a contradiction. Therefore, in each case, we have $b \leq 5$, $a \leq 9$, and $c \leq 9$ (for the last of the four point count matrices, note that no $42$-cap $5$-flat has a $\{16,16,10\}$ hyperplane direction), so $a + b + c \leq 23$, which gives a contradiction.
\end{proof}

\section{Dimension 7}

\begin{proposition}[Refining 6-flat directions]\label{n7s222}
Let the $7$-dimensional cap $C$ have co-ordinate $x_{1}$.
\begin{itemize}
\item[(a)] Suppose that the two $6$-flats $x_{1} = \pm 1$ have exactly $112$ cap points each. It follows that the $6$-flat $x_{1} = 0$ has at most $34$ cap points.
\item[(b)] Suppose that the two $6$-flats $x_{1} = \pm 1$ combined have at least $222$ points in total. It follows that the $6$-flat $x_{1} = 0$ has at most $55$ cap points.
\end{itemize}
\end{proposition}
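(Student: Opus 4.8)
The plan is to carry over the midpoint-counting method of Proposition \ref{n645Star45}. Write $A$ and $B$ for the sets of cap points in the $6$-flats $x_1 = -1$ and $x_1 = 1$ respectively, and identify both $6$-flats with a common $6$-dimensional vector space $V$ over $\mathbb{Z}/3\mathbb{Z}$ via the $x_1$-direction. Three points $(-1,u)$, $(1,w)$, $(0,q)$ of the $7$-flat are collinear exactly when $u + w + q = 0$, and the third point of such a line lies in $x_1 = 0$. Hence a point $(0,q)$ of the $6$-flat $x_1 = 0$ can be a cap point of $C$ only if it is not the midpoint of any cap line segment with one endpoint in $A$ and one in $B$; writing $n(q)$ for the number of such segments with midpoint $(0,q)$ (so $(0,q)$ is the third point of the line through $(-1,u) \in A$ and $(1,-q-u) \in B$), the number of cap points in $x_1 = 0$ is at most the number of $q$ with $n(q) = 0$.

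For part (a), both $A$ and $B$ are copies of the unique $112$-cap $6$-flat. I would fix $A$ as a representative $112$-cap and write $B = T(A)$ for an invertible affine map $T$; since translating $B$ only translates the midpoints, the relative position of $A$ and $B$ is captured by $T$ up to translation and up to pre- and post-composition with symmetries of the $112$-cap. To finitize the resulting search I would use Lemma \ref{n6s112str}: because the $56$ directions of point count $\{45,45,22\}$ and the $308$ of point count $\{40,36,36\}$ exhaust all $364$ hyperplane directions, the counts force $A$ and $B$ to share a common hyperplane direction (indeed a common $\{40,36,36\}$ direction, since $308 + 308 > 364$), which may be taken as a second co-ordinate $x_2$ aligning one axis of $T$; the refinement of Lemma \ref{n6s112str}(b) then produces further co-ordinates $y_1,y_2$, exactly as Lemma \ref{n6180918882A2} drives the $5$-dimensional reduction underlying Proposition \ref{n645Star45}. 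A computer search then examines each of the finitely many remaining maps $T$, recording for each both the number $n_0(T)$ of points $q$ with $n(q)=0$ and the number $n_{\le 2}(T)$ of points $q$ with $n(q) \le 2$; the search establishes that $n_0(T) \le 34$ for every $T$, which gives part (a).

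For part (b), the hypothesis that the two $6$-flats $x_1 = \pm 1$ combined contain at least $222 = 2 \cdot 112 - 2$ cap points forces each of them to have at least $110$ cap points, so by Theorem \ref{n6s110} together with the uniqueness of the $112$-cap each is a subcap of a $112$-cap, i.e.\ a $112$-cap minus at most two points, with at most two deletions in total. Completing the two flats to $112$-caps $A$ and $B$, the actual configuration is $A \cup B$ with a set $R$ of at most two points removed. A point $(0,q)$ can be a cap point of $C$ only if every one of the $n(q)$ cap line segments through it in the completed configuration has an endpoint in $R$; but for fixed $q$ each point of $R$ is an endpoint of at most one such segment (the partner in $B$ of a chosen endpoint in $A$ is forced, and vice versa), so at most $|R| \le 2$ of them can be broken. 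Thus every admissible $q$ satisfies $n(q) \le 2$, and the number of cap points in $x_1 = 0$ is at most $n_{\le 2}(T)$ for the completing map $T$. Since the completing maps range over the same family as in part (a), the search there shows $n_{\le 2}(T) \le 55$, giving part (b).

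The main obstacle is the finitization in part (a). In the $5$-dimensional situation Lemma \ref{n6180918882A2} yields a common $\{18,9,18\}$ co-ordinate, and the rigidity of $882A_2$ caps cuts the maps $T$ down to a short explicit list; here I must instead control the mutual alignment of two $6$-dimensional $112$-caps, whose affine symmetry group is large but whose relative position still ranges over many cases. Choosing the common co-ordinate system so that $T$ runs over a set small enough to enumerate yet provably containing a representative of every relative position is the crux. Once that family of maps $T$ is correctly delimited, the computations of $n_0$ and $n_{\le 2}$ are routine, and the bounds $34$ and $55$ emerge as the respective maxima.
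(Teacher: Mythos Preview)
Your strategy diverges from the paper's, and part (b) contains a genuine error.

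\textbf{How the paper actually argues.} The paper does not attempt a single search over all relative positions of two $112$-caps. Instead it fixes an $\{45,45,22\}$ hyperplane direction of the $112$-cap $x_{1}=-1$ as a second co-ordinate $x_{2}$; this direction is either $\{45,45,22\}$ (Option~1) or $\{40,36,36\}$ (Option~2) in the other $112$-cap. Most subcases are then dispatched by the already-proved $6$-dimensional results: Proposition~\ref{n645Star45}, the companion proposition for $\{45,40\}$ pairs, and Theorem~\ref{n6s110}. Only two tightly constrained subcases (Option~1 with reflecting $45$-caps; Option~2 Case~1) require a computer search, each over at most a few hundred configurations. The extremal bounds $34$ and $55$ both arise in Option~2 Case~2, where the middle row $(22,b,40)$ is controlled not by midpoint counting at all but by applying Theorem~\ref{n6s110} and Lemma~\ref{n6s112str}(a) to the diagonal $6$-flats to force $b\le 28$ (respectively $b\le 29$).

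\textbf{The gap in your part (b).} Your claimed inequality $n_{\le 2}(T)\le 55$ is false. When the two completed $112$-caps are translates of one another, the paper's own search (Option~1, item~(iii)) records exactly $112$ points $Q$ with $n(Q)\le 2$: indeed each has $n(Q)=1$, and the $112$ blocking segments are pairwise disjoint. Your bound ``cap points in $x_{1}=0\le n_{\le 2}(T)$'' is valid but yields only $112$ here. The paper sharpens this to $2$ by exploiting the disjointness: removing at most two cap points from $A\cup B$ breaks at most two of the $112$ disjoint segments, so at most two of those $Q$ become admissible. That refinement, together with the diagonal/$110$-cap argument needed in Option~2 Case~2, is exactly what your uniform $n_{\le 2}$ bound misses. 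Your part~(a) plan is not obviously wrong, but the finitization you describe (aligning one shared $\{40,36,36\}$ direction and invoking Lemma~\ref{n6s112str}(b)) does not by itself cut the family of $T$ down to a tractable list, and the paper's value $34$ is obtained via Theorem~\ref{n6s110} rather than by verifying $n_{0}(T)\le 34$ directly.
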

\begin{proof}
(a) For some co-ordinate $x_{2}$ of $C$ with $(x_{1},x_{2})$ independent, the point count of $C$ for $(x_{1},x_{2})$ is one of the following two options.

\textbf{Option 1}: $\displaystyle \left(\begin{array}{ccc}
45 & a & 45\\
22 & b & 22\\
45 & c & 45
\end{array}\right)$

The $45$-cap $5$-flats $(x_{1},x_{2}) = (-1,\pm 1)$ are point reflections of each other, and the $45$-cap $5$-flats $(x_{1},x_{2}) = (1,\pm 1)$ are point reflections of each other.

If neither of $(x_{1},x_{2}) = (1,\pm 1)$ is a point reflection of $(x_{1},x_{2}) = (-1,-1)$, then each of $a$, $b$, and $c$ is at most $6$, so $a + b + c \leq 3(6) = 18$.

If $(x_{1},x_{2}) = (1,-1)$ or $(x_{1},x_{2}) = (1,1)$ is a point reflection of $(x_{1},x_{2}) = (-1,-1)$, then up to isomorphism, the relative positions of the $6$-flats $x_{1} = \pm 1$ are among $2 \times 3^{5}$ options. (Fix the $6$-flat $x_{1} = -1$; the $6$-flat $x_{1} = 1$ is obtained by starting with a point reflection or translation of $x_{1} = -1$ and then, for some $5$-dimensional vector $v$, adding $-v$, $0$, and $v$ respectively to the $5$-flats $(x_{1},x_{2}) = (1,-1)$, $(x_{1},x_{2}) = (1,0)$, and $(x_{1},x_{2}) = (1,1)$ respectively.) A computer search verified that for each of those options,
\begin{itemize}
\item[(i)] every point in the $6$-flat $x_{1} = 0$ is the midpoint of a cap line segment with one endpoint in each of the $6$-flats $x_{1} = \pm 1$, so there are no cap points in the $6$-flat $x_{1} = 0$;
\item[(ii)] if the $112$-cap $6$-flats $x_{1} = \pm 1$ are not translations of each other, then there are at most two points $Q$ in the $6$-flat $x_{1} = 0$ such that $Q$ is the midpoint of at most two cap line segments with one endpoint in each of the $6$-flats $x_{1} = \pm 1$; and
\item[(iii)] if the $112$-cap $6$-flat $x_{1} = 1$ is the image of $x_{1} = -1$ under a translation map $U$, then there are exactly $112$ points $Q$ in $x_{1} = 0$ such that $Q$ is the midpoint of at most two cap line segments with one endpoint in each of $x_{1} = \pm 1$ (it follows that those $112$ points $Q$ are precisely the images of the cap points of $x_{1} = 1$ under $U$, each of the $112$ points $Q$ is the midpoint of exactly one such cap line segment, namely $U(Q)U^{-1}(Q)$, and those $112$ cap line segments $U(Q)U^{-1}(Q)$ are pairwise disjoint).
\end{itemize}

\textbf{Option 2}: $\displaystyle \left(\begin{array}{ccc}
45 & a & 36\\
22 & b & 40\\
45 & c & 36
\end{array}\right)$

Without loss of generality, suppose that $C$ is not included in Option $1$. There are two cases.
\begin{itemize}
\item Case 1: It is impossible for the $40$ in the point count of Option $2$ to correspond to a $40$-cap $5$-flat with an $\{18,18,4\}$ hyperplane direction in which the $18$-cap $4$-flats are $882A_{2}$ caps: In this case, the relative positions of the $112$-cap $6$-flats $x_{1} = \pm 1$ are strongly restricted up to isomorphism (the $56$ hyperplane directions of $x_{1} = -1$ with point count $\{45,22,45\}$ are translations of the $56$ hyperplane directions of $x_{1} = 1$ with point count $\{40,36,36\}$ in which the $40$-cap $5$-flat has no $\{18,18,4\}$ hyperplane direction in which the $18$-cap $4$-flats are $882A_{2}$ caps, and that statement also holds with $x_{1} = -1$ and $x_{1} = 1$ swapped); a computer search verified that in this case, the $6$-flat $x_{1} = 0$ has at most one cap point and at most one point $Q$ such that $Q$ is the midpoint of at most two cap line segments with one endpoint in each of $x_{1} = \pm 1$.
\item Case 2: We may take the $40$ in the point count of Option $2$ to correspond to a $40$-cap $5$-flat with an $\{18,18,4\}$ hyperplane direction in which the $18$-cap $4$-flats are $882A_{2}$ caps: Taking that scenario, we have $a \leq 3$, $c \leq 3$, and $b \leq 109 - (45 + 36) = 28$ (because no $110$-cap $6$-flat has a $\{45,36,29\}$ hyperplane direction), so $a + b + c \leq 3 + 28 + 3 = 34$.
\end{itemize}

(b) The $6$-flats $x_{1} = \pm 1$ can be obtained from those in part (a) by removing at most two cap points in total.

Suppose that the $6$-flats $x_{1} = \pm 1$ together are obtained from those in Option $1$ in part (a) by removing at most two cap points, and the numbers of cap points in the $5$-flats $(x_{1},x_{2}) = (0,1)$, $(x_{1},x_{2}) = (0,0)$, and $(x_{1},x_{2}) = (0,-1)$ are respectively $a$, $b$, and $c$.

If neither of the completions of $(x_{1},x_{2}) = (1,\pm 1)$ to $45$-cap $5$-flats is a point reflection of the completion of $(x_{1},x_{2}) = (-1,-1)$ to a $45$-cap $5$-flat, then each of $a$, $b$, and $c$ is at most $14$, so $a + b + c \leq 3(14) = 42$.

If the completion of $(x_{1},x_{2}) = (1,-1)$ or $(x_{1},x_{2}) = (1,1)$ to a $45$-cap $5$-flat is a point reflection of the completion of $(x_{1},x_{2}) = (-1,-1)$ to a $45$-cap $5$-flat, then it follows from the information about points $Q$ in part (a) that $a + b + c \leq 2$.

Suppose that the $6$-flats $x_{1} = \pm 1$ together are obtained from those in Option $2$ in part (a) by removing at most two cap points, and define $a$, $b$, and $c$ as before. We have $a \leq 13$, $c \leq 13$, and $b \leq 109 - (45 + 36 - 1) = 29$ (in at least one of the two $(45,b,36)$ diagonals, the number of cap points removed is at most $2/2 = 1$; no $110$-cap $6$-flat has a $\{45,36,29\}$, $\{45,35,30\}$, or $\{44,36,30\}$ hyperplane direction), so $a + b + c \leq 13 + 29 + 13 = 55$.
\end{proof}

\begin{theorem}
There are no $289$-cap $7$-flats.
\end{theorem}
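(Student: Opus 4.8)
Suppose for contradiction that $C$ is a $289$-cap $7$-flat. The plan is to mirror the proof of Theorem~\ref{n6s110}: first use a standard diagram to force one especially unbalanced hyperplane direction, and then eliminate it with Proposition~\ref{n7s222}. Since every $6$-dimensional cap has at most $112$ points, every hyperplane direction of $C$ has a sorted point count $(a,b,c)$ with $a \ge b \ge c$, $a \le 112$, and $a+b+c = 289$. I would first record the contradiction that follows once the two larger $6$-flats are big enough: if some hyperplane direction satisfies $a+b \ge 222$, then taking those two $6$-flats to be $x_{1} = \pm 1$ in Proposition~\ref{n7s222}(b) bounds the middle $6$-flat $x_{1} = 0$ by $55$ cap points, whereas that flat must contain $c = 289 - (a+b) \ge 289 - 224 = 65$ cap points, a contradiction. (The extreme corner $a=b=112$, $c=65$ is also killed this way, Proposition~\ref{n7s222}(a) giving the even stronger bound $34$.) Thus it suffices to prove that at least one hyperplane direction of $C$ has its two largest $6$-flats containing $222$ or more cap points between them, equivalently its smallest $6$-flat containing at most $67$ cap points.

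To produce such a direction I would build the standard diagram for $(n,s) = (7,289)$ exactly as Figure~\ref{fign6stddiag} is built for $(6,110)$. Each of the $(3^{7}-1)/2 = 1093$ hyperplane directions is plotted as a lattice point whose coordinates are affine functions of its per-direction second and third power sums $a_i^2+b_i^2+c_i^2$ and $a_i^3+b_i^3+c_i^3$ (the first power sum being the constant $289$); all such points lie in the feasible polygon cut out by the realizable $6$-cap sizes and by the constraints governing how three parallel $6$-flat counts can coexist. The weighted centroid of these points is then pinned by counting, over all hyperplane directions, the co-hyperplanar cap points, pairs, and triples of cap points. These incidence counts give the fixed aggregate identities $\sum_i (a_i + b_i + c_i) = 1093 \cdot 289$, $\sum_i (a_i^2+b_i^2+c_i^2) = 289 \cdot 288 \cdot 364 + 1093 \cdot 289$ (using that a pair of cap points is co-hyperplanar in $(3^{6}-1)/2 = 364$ directions), and the analogous cubic identity with factor $(3^{5}-1)/2 = 121$ (a triple being co-hyperplanar in that many directions). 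A supporting line $L$ of the feasible polygon, chosen so that the pinned centroid lies strictly on its far side, forces at least one direction-point beyond $L$; I would then verify that every feasible triple beyond $L$ satisfies $a + b \ge 222$. With the previous paragraph this finishes the proof.

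The main obstacle is this standard-diagram step. The second-moment estimate alone is too weak: a near-balanced triple such as $(112,109,68)$ already has a large squared norm while keeping $a+b = 221 < 222$, so the required imbalance cannot be forced from the quadratic identity by itself. The argument instead needs the finer cubic incidence identity together with the exact list of realizable $6$-dimensional subcap sizes and their compatibility within a direction; these inputs come from the dimension-$6$ classification above, in particular the uniqueness and rigidity of the $112$-cap $6$-flat and Theorem~\ref{n6s110}. Assembling the feasible polygon, computing the pinned centroid, and exhibiting a separating line $L$ whose outer region consists only of triples with $a+b \ge 222$ is the delicate and largely computational heart of the proof; the application of Proposition~\ref{n7s222} that follows is then immediate.
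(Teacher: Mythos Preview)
Your proposal is correct and is exactly the paper's argument: the standard diagram (Figure~\ref{fign7stddiag}, with the separating line $L$ through $P_{111,110,68}$ and $P_{97,96,96}$) forces at least one hyperplane direction with sorted point count among $\{112,112,65\}$, $\{112,111,66\}$, $\{112,110,67\}$, $\{111,111,67\}$---precisely your triples with $a+b\ge 222$---after which Proposition~\ref{n7s222}(b) gives the contradiction $c\le 55<65$. One small clarification: Theorem~\ref{n6s110} is not an input to the standard-diagram feasible region (the crude bound $a,b,c\le 112$ already suffices there) but is instead what makes Proposition~\ref{n7s222}(b) applicable, since it guarantees that two parallel $6$-flats with at least $222$ cap points combined are jointly a pair of $112$-cap $6$-flats minus at most two points.
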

\begin{proof}
By the standard diagram in Figure \ref{fign7stddiag}, each $289$-cap $7$-flat has at least one hyperplane direction with point count among $\{112,112,65\}$, $\{112,111,66\}$, $\{112,110,67\}$, and $\{111,111,67\}$. Each of those options is ruled out by Proposition \ref{n7s222} together with Theorem \ref{n6s110}.
\end{proof}
\begin{figure}
\includegraphics{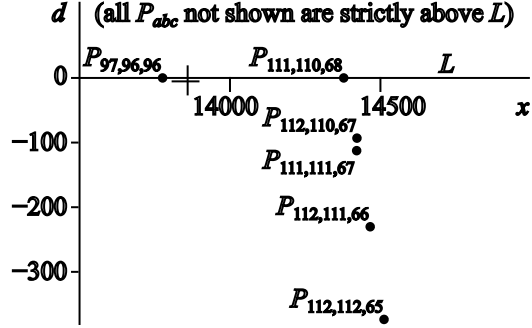}
\caption{Standard diagram for $(n,s) = (7,289)$. The line $L$ has equation $86y = 7793x - 70101168$ and passes through $P_{111,110,68}$ and $P_{97,96,96}$.\label{fign7stddiag}}
\end{figure}

\section*{Acknowledgements}

Many thanks to my postdoctoral supervisor Prof.\@ James Raftery, to Prof.\@ Roumen Anguelov, to Prof.\@ Jan Harm van der Walt, to Prof.\@ Mapundi Banda, to Prof.\@ Anton Str\"{o}h, and to everyone else at the University of Pretoria, for all their generous support at this extraordinary time.

This work was supported by the UP Post-Doctoral Fellowship Programme administered by the University of Pretoria [grant number A0X 816].

Computer searches were carried out using Java programs on the Eclipse IDE software.

Many thanks to my mother Dr Anne Thackeray for letting me use her computer together with my own to perform some computer searches, and to all of my family for everything.

\end{document}